\newtheorem{thm}{Theorem}[section]
\newtheorem{cor}[thm]{Corollary}
\newtheorem{lem}[thm]{Lemma}
\newcommand{\R}{{\mathbb{R}}}
\newcommand{\Z}{{\mathbb{Z}}}
\newcommand{\2}{\overline}
\begin{document}
\title{Singular limit and exact decay rate of a nonlinear elliptic equation} 
\author{Shu-Yu Hsu\\
Department of Mathematics\\
National Chung Cheng University\\
168 University Road, Min-Hsiung\\
Chia-Yi 621, Taiwan, R.O.C.\\
e-mail: syhsu@math.ccu.edu.tw}
\date{July 14, 2011}
\smallbreak \maketitle
\begin{abstract}
For any $n\ge 3$, $0<m\le (n-2)/n$, and constants $\eta>0$, $\beta>0$,
$\alpha$, satisfying $\alpha\le\beta(n-2)/m$, we prove the existence of 
radially symmetric solution of $\frac{n-1}{m}\Delta v^m+\alpha v
+\beta x\cdot\nabla v=0$, $v>0$, in $\R^n$, $v(0)=\eta$, without using 
the phase plane method. When $0<m<(n-2)/n$, 
$n\ge 3$, and $\alpha=2\beta/(1-m)>0$, we prove that the radially 
symmetric solution $v$ of the above elliptic equation satisfies 
$\lim_{|x|\to\infty}\frac{|x|^2v(x)^{1-m}}{\log |x|}
=\frac{2(n-1)(n-2-nm)}{\beta(1-m)}$. In particular when $m=\frac{n-2}{n+2}$, 
$n\ge 3$, and $\alpha=2\beta/(1-m)>0$, the metric $g_{ij}=v^{\frac{4}{n+2}}dx^2$ 
is the steady soliton solution of the Yamabe flow on $\R^n$ and we obtain 
$\lim_{|x|\to\infty}\frac{|x|^2v(x)^{1-m}}{\log |x|}=\frac{(n-1)(n-2)}{\beta}$.
When $0<m<(n-2)/n$, $n\ge 3$, and $2\beta/(1-m)>\max (\alpha,0)$, we prove that
$\lim_{|x|\to\infty}|x|^{\alpha/\beta}v(x)=A$ for some constant $A>0$. 
For $\beta>0$ or $\alpha=0$, we prove that the radially symmetric solution 
$v^{(m)}$ of the above elliptic elliptic equation converges uniformly on every 
compact subset of $\R^n$ to the solution $u$ of the equation 
$(n-1)\Delta\log u+\alpha u+\beta x\cdot\nabla u=0$, $u>0$, in $\R^n$, 
$u(0)=\eta$, as $m\to 0$. 
\end{abstract}

\vskip 0.2truein

Key words: existence of solution, nonlinear elliptic equations,
singular limit, exact decay rate, Yamabe flow

AMS Mathematics Subject Classification: Primary 35J60, 35B40 Secondary 34C11,
58J05
\vskip 0.2truein
\setcounter{equation}{0}
\setcounter{section}{-1}

\section{Introduction}
\setcounter{equation}{0}
\setcounter{thm}{0}

Recently there is a lot of interest in the following singular diffusion 
equation \cite{A}, \cite{DK}, \cite{P}, 
\begin{equation}\label{diffusion-eqn}
u_t=\frac{n-1}{m}\Delta u^m\quad\mbox{ in }\R^n\times (0,T)
\end{equation}
which arises in the study of many physical models.
When $m>1$, \eqref{diffusion-eqn} is called the porous medium equation which 
models the the flow of gases through porous medium. When $m=1$, 
\eqref{diffusion-eqn} is the well known heat equation with diffusivity 
coefficient equal to $(n-1)/m$. When $0<m<1$, \eqref{diffusion-eqn} is 
called the fast diffusion equation. Interested reader can read the book 
\cite{DK} by P.~Daskalopoulos and C.E.~Kenig and the book \cite{V1} by 
J.L.~Vazquez for the most recent results on \eqref{diffusion-eqn}. 

For any $n\in\Z^+$, $n\ge 3$, $0<m<1$, $\eta>0$, suppose $v$ is the solution 
of 
\begin{equation}\label{elliptic-eqn}
\left\{
\begin{aligned}
&\frac{n-1}{m}\Delta v^m
+\alpha v+\beta x\cdot\nabla v=0, v>0,\quad\mbox{ in }\R^n\\
&v(0)=\eta.\end{aligned}\right.
\end{equation}
Then as observed by B.H.~Gilding and L.A.~Peletier \cite{GP} and others
\cite{DS}, \cite{V1}, \cite{V2},
the function
\begin{equation*}
u_1(x,t)=t^{-\alpha}v(xt^{-\beta})
\end{equation*}
is a solution of \eqref{diffusion-eqn} in $\R^n\times (0,\infty)$ if 
\begin{equation}\label{forward}
\alpha=\frac{2\beta-1}{1-m}
\end{equation}
and for any $T>0$ the function 
\begin{equation*}
u_2(x,t)=(T-t)^{\alpha}v(x(T-t)^{\beta})
\end{equation*}
is a solution of \eqref{diffusion-eqn} in $\R^n\times (0,T)$ if 
\begin{equation}\label{backward}
\alpha=\frac{2\beta+1}{1-m}>0
\end{equation}
and the function 
\begin{equation*}
u_3(x,t)=e^{-\alpha t}v(xe^{-\beta t})
\end{equation*}
is an eternal solution of \eqref{diffusion-eqn} in 
$\R^n\times (-\infty,\infty)$ if 
\begin{equation}\label{eternal}
\alpha=\frac{2\beta}{1-m}.
\end{equation}
On the other hand P.~Daskalopoulos and N.~Sesum \cite{DS} proved that a
locally conformally flat gradient Yamabe soliton with positive sectional 
curvature must be radially symmetric and 
the metric $g_{ij}=v^{\frac{4}{n+2}}dx^2$ satisfies \eqref{elliptic-eqn} or
\begin{equation}\label{ode}
\frac{n-1}{m}\left((v^m)''+\frac{n-1}{r}(v^m)'\right)
+\alpha v+\beta rv'=0, v>0,
\end{equation}
in $(0,\infty)$ and 
\begin{equation}\label{initial-cond}
\left\{\begin{aligned}
&v(0)=\eta\\
&v'(0)=0\end{aligned}\right.
\end{equation}
for some constant $\eta>0$ where $dx^2$ is the standard metric on $\R^n$ 
with $m=(n-2)/(n+2)$, $n\ge 3$, and
\begin{equation}\label{alpha-beta-1}
\alpha=\frac{2\beta+\rho_1}{1-m}
\end{equation}
for some constants $\beta>0$, $\alpha$, and $\rho_1$ where $\rho_1=0$ if 
$g_{ij}$ is a Yamabe steady soliton, $\rho_1<0$ if $g_{ij}$ is a Yamabe 
expander soliton, and $\rho_1>0$ if $g_{ij}$ is a Yamabe shrinker soliton.

Since the asymptotic behaviour of the solutions of \eqref{diffusion-eqn}
are usually similar to either the functions $u_1$, $u_2$ or $u_3$, it is
important to study the solutions of \eqref{elliptic-eqn} in order to 
understand the behaviour of solutions of \eqref{diffusion-eqn} and the
locally conformally flat gradient Yamabe solitons. Existence and uniqueness 
of radially symmetric solution of \eqref{elliptic-eqn} for $\alpha$, $\beta$, 
satisfying \eqref{backward} and \begin{equation}\label{m-range}
0<m<\frac{n-2}{n}, n\ge 3,
\end{equation}
is proved by M.A.~Peletier, H.~Zhang \cite{PZ} and J.R.~King \cite{K} using 
phase plane method (cf. Proposition 7.4 of \cite{V1}). Existence of radially
symmetric solution of \eqref{elliptic-eqn} for $\alpha$, $\beta>0$, satisfying 
\eqref{forward} and \eqref{m-range} is proved on P.22 of \cite{DS}. A sketch 
of the proof of the existence of radially symmetric solution of 
\eqref{elliptic-eqn} for $m=(n-2)/(n+2)$, $n\ge 3$, and $\alpha$, $\beta>0$, 
satisfying \eqref{eternal} is given on P.22-23 of \cite{DS}. This existence
result is also noted without proof in \cite{GaP}. 

In \cite{DS} P.~Daskalopoulos and N.~Sesum also 
proved that if $m=(n-2)/(n+2)$, $n\ge 6$ and $\alpha$, $\beta>0$, satisfy 
\eqref{eternal}, then the radially symmetric solution of \eqref{elliptic-eqn} 
satisfies  
\begin{equation}\label{v-ineqn}
C_1\frac{\log |x|}{|x|^2}\le v(x)^{1-m}\le C_2\frac{\log |x|}{|x|^2}
\qquad\qquad\mbox{ as }|x|\to\infty
\end{equation}
for some constants $C_2>C_1>0$.

In this paper we will extend the result of \cite{DS} and give a new simple 
rigorous proof of the existence of radially symmetric solutions of 
\eqref{elliptic-eqn} for any $\eta>0$ and $\alpha$, $\beta$, $n$, $m$, 
satisfying 
\begin{equation}\label{m-range2}
0<m\le\frac{n-2}{n},\quad n\ge 3,
\end{equation}
and 
\begin{equation}\label{alpha-beta-relation}
\alpha\le\frac{\beta (n-2)}{m}\quad\mbox{ and }\quad\beta>0
\end{equation}
without using the phase plane method. Note that if \eqref{m-range2} holds, 
then \eqref{alpha-beta-relation} holds if $\beta>0$ and 
$$
\alpha\le\frac{2\beta}{1-m}
$$
hold. For 
\begin{equation}\label{alpha-beta-relation2}
\beta>0\quad\mbox{ or }\quad\alpha=0,
\end{equation}
we prove that the radially symmetric 
solution $v^{(m)}$ of \eqref{elliptic-eqn} converges uniformly on every 
compact subset of $\R^n$ to the solution $u$ of the equation 
\begin{equation}\label{log-eqn}
\left\{\begin{aligned}
&(n-1)\Delta\log u+\alpha u+\beta x\cdot\nabla u=0, u>0,\mbox{ in }\R^n\\
&u(0)=\eta
\end{aligned}\right.\\
\end{equation}
as $m\to 0$. When $\alpha$, $\beta$, $m$, satisfy \eqref{m-range} and 
\begin{equation}\label{alpha-beta-relation3}
\alpha=2\beta/(1-m)>0,
\end{equation} 
we prove that the radially symmetric solution $v$ of \eqref{elliptic-eqn} 
satisfies
\begin{equation}\label{decay-rate2}
\lim_{|x|\to\infty}\frac{|x|^2v(x)^{1-m}}{\log |x|}
=\frac{2(n-1)(n-2-nm)}{\beta(1-m)}.
\end{equation}
When $m=(n-2)/(n+2)$ and \eqref{alpha-beta-relation3} hold, this 
result says that the locally conformally flat gradient steady Yamabe solitons 
$g_{ij}=v^{\frac{4}{n+2}}dx^2$, $n\ge 3$, has exact decay rate 
\begin{equation}\label{decay-rate}
\lim_{|x|\to\infty}\frac{|x|^2v(x)^{1-m}}{\log |x|}=\frac{(n-1)(n-2)}{\beta}.
\end{equation}
In Theorem 3.2 of \cite{V1} J.L.Vazquez by using phase plane method 
proved that if \eqref{forward} and \eqref{m-range} holds, then the 
radially symmetric solution $v$ of \eqref{elliptic-eqn} satisfies 
\begin{equation}\label{decay-rate3}
\lim_{|x|\to\infty}|x|^{\alpha/\beta}v(x)=A
\end{equation}
for some constant $A>0$. In this paper we will extend this theorem and use a 
modification of the technique of \cite{Hs} to give a new simple proof of
the result that if \eqref{m-range} and 
\begin{equation}\label{alpha-beta-relation5}
\frac{2\beta}{1-m}>\max (\alpha,0)
\end{equation}
hold and $v$ is the radially symmetric solution of \eqref{elliptic-eqn}, 
then \eqref{decay-rate3} for some constant $A>0$. 

The plan of the paper is as follows. In section 1 we will prove the 
existence of radially symmetric solutions of \eqref{elliptic-eqn} when
\eqref{m-range2} and \eqref{alpha-beta-relation} hold. We will also prove
the singular limit of the radially symmetric solution of 
\eqref{elliptic-eqn} as $m\to 0$. In section 2 we will prove the exact decay 
rate \eqref{decay-rate2} of the radially symmetric solution of 
\eqref{elliptic-eqn} when \eqref{m-range} and \eqref{alpha-beta-relation3} 
hold. In section 3 we will prove the decay rate \eqref{decay-rate3} of the 
radially symmetric solution of \eqref{elliptic-eqn} when \eqref{m-range} and
\eqref{alpha-beta-relation5} hold. 
We let
$$
k=\frac{\beta}{\alpha}\quad\mbox{ if }\alpha\ne 0.
$$
and we will assume that \eqref{m-range2} holds for the rest of the paper. 

\section{Existence and singular limit of solutions}
\setcounter{equation}{0}
\setcounter{thm}{0}

In this section we will prove the existence of radially symmetric solutions 
of \eqref{elliptic-eqn} and the singular limit of radially 
symmetric solutions of \eqref{elliptic-eqn} as $m\to 0$.

\begin{lem}\label{h1-lower-bd}
Let $m$, $\alpha\ne 0$, $\beta\ne 0$, satisfy \eqref{m-range2} and 
\begin{equation}\label{alpha-beta-relation4}
\frac{m\alpha}{\beta}\le n-2.
\end{equation}
For any $R_0>0$ and $\eta>0$, let $v$ be the solution of \eqref{ode}, 
\eqref{initial-cond}, in $(0,R_0)$. Then  
\begin{equation}\label{basic-monotone-ineqn}
v+krv'(r)>0\quad\mbox{ in }[0,R_0)
\end{equation}
and
\begin{equation}\label{v'-bd0}
\left\{\begin{aligned}
&v'(r)<0\quad\mbox{ in }(0,R_0)\quad\mbox{ if }\alpha>0\\
&v'(r)>0\quad\mbox{ in }(0,R_0)\quad\mbox{ if }\alpha<0.\end{aligned}\right.
\end{equation}
\end{lem}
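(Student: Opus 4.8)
The plan is to track the single quantity $h(r):=v(r)+krv'(r)$ with $k=\beta/\alpha$, show it never vanishes on $[0,R_0)$, and then read off the sign of $v'$ from the flux. The starting point is the algebraic observation that $\alpha v+\beta rv'=\alpha(v+krv')=\alpha h$, so the zeroth-order part of \eqref{ode} is exactly $\alpha h$ up to the positive factor. The initial data \eqref{initial-cond} give $h(0)=v(0)=\eta>0$, so it suffices to exclude a first zero of $h$ in $(0,R_0)$.

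The heart of the argument is a pointwise identity for $h'$ at a zero of $h$. First I would expand $(v^m)'$ and $(v^m)''$ in \eqref{ode}, divide by $(n-1)v^{m-1}>0$, and solve for $v''$, getting $v''=(1-m)(v')^2/v-\tfrac{n-1}{r}v'-\tfrac{\alpha h}{(n-1)v^{m-1}}$. Differentiating $h=v+krv'$ and inserting this gives
\[
h'=[1-k(n-2)]\,v'+kr(1-m)\frac{(v')^2}{v}-\frac{kr\alpha}{(n-1)v^{m-1}}\,h.
\]
At any point with $h=0$ one has $v'=-v/(kr)$; substituting and simplifying collapses everything to
\[
h'\big|_{h=0}=\frac{v}{r}\left(n-2-\frac{m\alpha}{\beta}\right),
\]
which by hypothesis \eqref{alpha-beta-relation4} is $\ge 0$, \emph{independently of the signs of $\alpha$ and $\beta$} — this sign-free form is what makes the argument uniform across the cases of \eqref{v'-bd0}. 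If $h$ had a first zero $r_0\in(0,R_0)$, then $h>0$ on $[0,r_0)$ together with $h(r_0)=0$ forces $h'(r_0)\le 0$; but when \eqref{alpha-beta-relation4} holds strictly we have $h'(r_0)>0$, a contradiction, so $h>0$ throughout.

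The delicate point, and the step I expect to be the main obstacle, is the borderline case $m\alpha/\beta=n-2$, where the identity only yields $h'(r_0)=0$ and no immediate contradiction. Here I would exploit that $1-k(n-2)=1-m$ in this case, so the displayed $h'$-equation factors as $h'=h\,\Phi(r)$ with $\Phi(r)=\frac{(1-m)v'}{v}-\frac{kr\alpha}{(n-1)v^{m-1}}$, which is continuous on $(0,R_0)$ along the solution. This is a linear homogeneous first-order ODE for $h$, so a zero at $r_0$ propagates by uniqueness to give $h\equiv 0$, contradicting $h(0)=\eta>0$. This completes the proof of \eqref{basic-monotone-ineqn} on all of $[0,R_0)$.

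Finally, for \eqref{v'-bd0} I would pass to the flux $F(r):=r^{n-1}(v^m)'=mr^{n-1}v^{m-1}v'$, whose sign equals that of $v'$. From \eqref{ode} it satisfies $F(0)=0$ and $F'(r)=-\tfrac{m\alpha}{n-1}r^{n-1}h(r)$. Since $h>0$ on $(0,R_0)$, the right-hand side has the sign of $-\alpha$, so integrating from $0$ gives $F<0$ when $\alpha>0$ and $F>0$ when $\alpha<0$, which is precisely \eqref{v'-bd0}. There is no circularity here: the positivity of $h$ was established without any appeal to the sign of $v'$.
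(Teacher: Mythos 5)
Your proof is correct, and at its core it runs on the same engine as the paper's: you track the same auxiliary function $h=v+krv'$ (the paper's $h_1$), and your displayed formula for $h'$ is in fact the paper's identity \eqref{h1-eqn} in rearranged form, since $(1-m)\frac{v'}{v}h=(1-m)v'+(1-m)kr\frac{(v')^2}{v}$ and $kr\alpha v^{1-m}=\beta r v^{1-m}$; your flux argument for \eqref{v'-bd0} is also exactly the paper's. Where you genuinely diverge is the mechanism for extracting positivity of $h$ from that identity. The paper views \eqref{h1-eqn} as a linear inhomogeneous ODE for $h_1$ with nonnegative forcing $\frac{(n-2)-(m/k)}{r}v$, multiplies by the explicit integrating factor $r^{n-2-(m/k)}f(r)$ with $f$ as in \eqref{f-defn}, and concludes that $r^{n-2-(m/k)}f(r)h_1(r)$ is nondecreasing, hence positive; this disposes of the strict and the borderline case of \eqref{alpha-beta-relation4} in one stroke. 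You instead argue pointwise at a hypothetical first zero $r_0$, which forces a case split: when $\frac{m\alpha}{\beta}<n-2$ you get $h'(r_0)>0$ against $h'(r_0)\le 0$, while the equality case $\frac{m\alpha}{\beta}=n-2$ needs the separate observation that the identity degenerates to the homogeneous linear equation $h'=\Phi h$ with $\Phi$ continuous, so a zero propagates by ODE uniqueness and contradicts $h(0)=\eta>0$. Both routes are complete; yours is more elementary in that it never requires producing the integrating factor, while the paper's is more uniform and shorter once that factor is written down. Your treatment of the borderline case is precisely the point where a naive first-zero argument would fail, and you close that gap correctly.
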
 
\begin{proof}
Let $h_1(r)=v(r)+krv'(r)$. By \eqref{alpha-beta-relation4}, $(n-2)\ge m/k$. 
Then by direct computation,  
\begin{equation}\label{h1-eqn}
h_1'+\left(\frac{(n-2)-(m/k)}{r}-(1-m)\frac{v'}{v}+\frac{\beta}{n-1}rv^{1-m}
\right)h_1=\frac{(n-2)-(m/k)}{r}v\ge 0\quad\mbox{ in }(0,R_0).
\end{equation}
Let 
\begin{equation}\label{f-defn}
f(r)=v(r)^{m-1}exp\left(\frac{\beta}{n-1}\int_0^r\rho v(\rho)^{1-m}\,d\rho\right).
\end{equation}
By \eqref{h1-eqn}, 
\begin{align*}
&(r^{n-2-(m/k)}f(r)h_1(r))'\ge 0\quad\forall 0<r<R_0\nonumber\\
\Rightarrow\quad&r^{n-2-(m/k)}f(r)h_1(r)>0\qquad\forall 0<r<R_0\nonumber\\
\Rightarrow\quad&h_1(r)>0\qquad\qquad\qquad\quad\,\forall 0\le r<R_0
\end{align*} 
and \eqref{basic-monotone-ineqn} follows. By \eqref{ode},
\eqref{initial-cond}, and \eqref{basic-monotone-ineqn},
\begin{align*}
&\frac{n-1}{m}\frac{1}{r^{n-1}}(r^{n-1}(v^m)')'=-\alpha h_1
\left\{\begin{aligned}
&<0\quad\mbox{ in }(0,R_0)\quad\mbox{ if }\alpha>0\\
&>0\quad\mbox{ in }(0,R_0)\quad\mbox{ if }\alpha<0
\end{aligned}\right.\\
\Rightarrow\quad&\left\{\begin{aligned}
&r^{n-1}(v^m)'<0\quad
\mbox{ in }(0,R_0)\quad\mbox{ if }\alpha>0\\
&r^{n-1}(v^m)'>0\quad\mbox{ in }(0,R_0)\quad\mbox{ if }\alpha<0
\end{aligned}\right.
\end{align*}
and \eqref{v'-bd0} follows.
\end{proof}

\begin{thm}\label{existence-thm}
Let $\eta>0$ and let $\alpha,\beta\in\R$, $m$, satisfy \eqref{m-range2}
and \eqref{alpha-beta-relation}. Then there exists a unique solution $v$ of 
\eqref{ode}, \eqref{initial-cond}, in $(0,\infty)$. Moreover the function
\begin{equation}\label{w-defn}
w_1(r)=r^{2}v(r)^{2k}
\end{equation}
satisfies $w_1'(r)>0$ for all $r>0$.
\end{thm}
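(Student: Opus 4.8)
The plan is to handle the degenerate case $\alpha=0$ separately and, for $\alpha\neq 0$, to couple a local existence argument at the singular point $r=0$ with the one-sided a priori estimates already provided by Lemma \ref{h1-lower-bd}, and then push the local solution to all of $(0,\infty)$ by a continuation argument. For $\alpha=0$ the problem collapses: writing $p(r)=r^{n-1}(v^m)'(r)$, equation \eqref{ode} reduces to the linear homogeneous relation $p'=-\frac{\beta}{n-1}rv^{1-m}p$, while \eqref{initial-cond} forces $p(0)=0$. Uniqueness for this linear first order ODE gives $p\equiv 0$, hence $(v^m)'\equiv 0$ and $v\equiv\eta$; conversely $v\equiv\eta$ clearly solves the problem, so existence and uniqueness hold and $w_1$ need not be discussed.

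For $\alpha\neq 0$, note first that since $\beta>0$ the hypothesis \eqref{alpha-beta-relation} is exactly condition \eqref{alpha-beta-relation4}, so Lemma \ref{h1-lower-bd} applies on every interval of existence. I would obtain a local $C^2$ solution on some $[0,\delta)$ by recasting \eqref{ode}, \eqref{initial-cond} as an integral equation: integrating $(r^{n-1}(v^m)')'=-\frac{m}{n-1}r^{n-1}(\alpha v+\beta rv')$ and then integrating the term $\beta rv'$ by parts eliminates the derivative of $v$, producing a fixed point equation for $v$ alone in which the singular weight $r^{-(n-1)}$ appears only against an inner integral of order $r^{n}$ and is therefore harmless near $0$. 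Because $v(0)=\eta>0$, on a sufficiently short interval $v$ stays close to $\eta$, so $v^m$ and $v^{m-1}$ are smooth and Lipschitz there, and a contraction mapping argument on $C[0,\delta]$ yields a unique local solution.

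I would then continue this solution to all of $(0,\infty)$. On the maximal interval $(0,R_{\max})$, Lemma \ref{h1-lower-bd} gives $h_1=v+krv'>0$ and the sign of $v'$ recorded in \eqref{v'-bd0}. When $\alpha>0$ one has $k>0$ and $v$ decreasing, so $0<v\le\eta$; dividing $h_1>0$ by $krv$ gives $(\log v)'>-1/(kr)$, whence $v(r)\ge Cr^{-1/k}$, so $v$ cannot reach $0$ on a bounded interval. When $\alpha<0$ one has $k<0$ and $v$ increasing, so $v\ge\eta$; the same inequality now reads $(\log v)'<1/(|k|r)$, giving $v(r)\le Cr^{1/|k|}$, so $v$ cannot blow up in finite $r$. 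In either case $v$ remains in a compact subset of $(0,\infty)$ as $r\uparrow R_{\max}$, which forces $R_{\max}=\infty$; global uniqueness follows from the local uniqueness together with a standard connectedness argument on the set where two solutions agree.

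Finally, the monotonicity of $w_1$ is immediate from Lemma \ref{h1-lower-bd}: differentiating \eqref{w-defn} gives $w_1'(r)=2rv^{2k-1}(v+krv')=2rv^{2k-1}h_1(r)$, which is strictly positive for $r>0$ since $v>0$ and $h_1>0$ by \eqref{basic-monotone-ineqn}. I expect the main obstacle to be the local solvability at the regular singular point $r=0$ with the degenerate nonlinearity $v^m$, since this is the only place where some care with function spaces and the $r^{-(n-1)}$ weight is required; once that is settled, the passage to a global solution is driven entirely by the one-sided bounds coming from Lemma \ref{h1-lower-bd}, which is precisely why that lemma was isolated beforehand.
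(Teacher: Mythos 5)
Your proposal follows essentially the same route as the paper: the constant solution handles $\alpha=0$, and for $\alpha\ne 0$ both arguments run a maximal-interval continuation driven by the one-sided bound $h_1=v+krv'>0$ and the sign of $v'$ from Lemma \ref{h1-lower-bd}, split into the cases $\alpha>0$ and $\alpha<0$; your computation $w_1'(r)=2rv^{2k-1}(v+krv')>0$ is identical to the paper's \eqref{w'-eqn}. Where you differ, you are in places cleaner: in the case $\alpha<0$ the paper bounds $v$ by upgrading $0<v'\le v/(|k|r)$ to $0<v'\le Cv$ on all of $[0,R_0)$ via ``an argument similar to case 2 of Theorem 1.3 of \cite{Hs}'', whereas you simply integrate $(\log v)'\le 1/(|k|r)$ away from the origin, which suffices because the obstruction to continuation sits at $R_{\max}$, not at $r=0$. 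Likewise your fixed-point formulation of local existence is more careful than the paper's bare appeal to standard O.D.E. theory, which does not literally apply at the singular point $r=0$.

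The one step you must tighten is the continuation criterion itself. For a second-order equation, knowing that $v$ stays in a compact subset of $(0,\infty)$ as $r\uparrow R_{\max}$ does \emph{not} by itself force $R_{\max}=\infty$: the solution can also fail to extend because $v'$ escapes to infinity, and indeed the paper's contradiction argument explicitly rules out the scenario $|v'(r_i)|\to\infty$ (via the integrated-equation bound \eqref{v'-bd} when $\alpha>0$, and via \eqref{v'-v0-bd} when $\alpha<0$). In your setup the repair costs one line and no new idea: combining $h_1>0$ with the sign of $v'$ from \eqref{v'-bd0} gives $|v'(r)|\le v(r)/(|k|r)$ on $(0,R_{\max})$ in both cases, and together with your two-sided bounds on $v$ this bounds $v'$ on $[R_{\max}/2,R_{\max})$; with that sentence added, the phase point $(v,v')$ stays in a compact subset of $(0,\infty)\times\R$ near $R_{\max}$ and the continuation argument closes.
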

\begin{proof}
We will use a modification of the proof of Theorem 1.3 of \cite{Hs} to prove 
the theorem. If $\alpha=0$, the constant function $v(r)\equiv\eta$ is 
the unique solution of \eqref{ode}, \eqref{initial-cond}, in $(0,\infty)$
and then $w_1(r)=\eta^{2k}r^2$ satisfies $w'(r)>0$ for any $r>0$. 
Hence we may assume $\alpha\ne 0$ in the proof. 

We next note that uniqueness 
of solution of \eqref{ode}, \eqref{initial-cond}, in $(0,\infty)$ follows 
by standard O.D.E. theory. Hence we only need to prove existence of solution 
of \eqref{ode}, \eqref{initial-cond}, in $(0,\infty)$. Local existence of 
solution of \eqref{ode}, \eqref{initial-cond}, in a neighbourhood of the 
origin follows by standard O.D.E. theory.
   
Let $(0,R_0)$ be the maximal interval of existence of solution of \eqref{ode}, 
\eqref{initial-cond}. Suppose $R_0<\infty$. Then there exists a sequence 
$\{r_i\}_{i=1}^{\infty}$, $r_i\nearrow R_0$ as $i\to\infty$, such that 
either 
$$
|v'(r_i)|\to\infty\,\,\mbox{as}\,\, i\to\infty\quad\mbox{ or }\quad v(r_i)
\searrow 0\,\,\mbox{as}\,\, i\to\infty\quad\mbox{ or }\quad
v(r_i)\to\infty\,\, \mbox{as}\,\, i\to\infty.
$$
By \eqref{alpha-beta-relation}, \eqref{alpha-beta-relation4} holds. Hence by 
Lemma \ref{h1-lower-bd},
\begin{equation}\label{w'-eqn}
w_1'(r)=2rv^{2k}+2kr^2v^{2k-1}v'=2rv^{2k-1}(v+krv')>0
\quad\forall 0<r<R_0.
\end{equation}
We now divide the proof into two cases.

\noindent $\underline{\text{\bf Case 1}}$: $\alpha>0$.

\noindent By \eqref{w'-eqn},
\begin{align}\label{v-lower-bd}
&w_1(r)=r^2v^{2k}\ge w_1(R_0/2)>0\quad\forall R_0/2\le r<R_0\nonumber\\
\Rightarrow\quad&v(r)\ge (R_0^{-2}w_1(R_0/2))^{\frac{1}{2k}}\quad\forall R_0/2
\le r<R_0. 
\end{align} 
By Lemma \ref{h1-lower-bd} $v'<0$ on $(0,R_0)$. Hence 
\begin{equation}\label{v-upper-bd}
0<v(r)\le v(0)=\eta\quad\forall 0\le r<R_0.
\end{equation}
By \eqref{ode}, \eqref{initial-cond}, and \eqref{v-upper-bd},
\begin{align}
&\frac{n-1}{m}\frac{1}{r^{n-1}}(r^{n-1}(v^m)')'=-(\alpha v+\beta rv')\quad
\mbox{ in }(0,R_0)\nonumber\\
\Rightarrow\quad&(n-1)r^{n-1}(v^m)'=-m\left(\alpha\int_0^r\rho^{n-1}v(\rho)
\,d\rho
+\beta\int_0^r\rho^nv'(\rho)\,d\rho\right)\quad\mbox{ in }(0,R_0)\nonumber\\
\Rightarrow\quad&(n-1)(v^m/m)'=-\beta rv(r)+\frac{(n\beta-\alpha)}{r^{n-1}}
\int_0^r\rho^{n-1}v(\rho)\,d\rho\qquad\qquad\,\,
\mbox{ in }(0,R_0)\label{vm'-bd}\\
\Rightarrow\quad&(n-1)v(r)^{m-1}|v'(r)|\le\left(\beta 
+\frac{|n\beta-\alpha|}{n}\right)R_0v(0)\qquad\qquad\qquad\qquad
\mbox{ in }(0,R_0)\nonumber\\
\Rightarrow\quad&(n-1)|v'(r)|\le\left(\beta 
+\frac{|n\beta-\alpha|}{n}\right)R_0v(0)^{2-m}\qquad\qquad\qquad\qquad\quad
\,\,\,\mbox{ in }(0,R_0)\label{v'-bd}.
\end{align}
By \eqref{v-lower-bd}, \eqref{v-upper-bd}, \eqref{v'-bd}, a contradiction 
arises. Hence no such sequence $\{r_i\}_{i=1}^{\infty}$ exists. Thus 
$R_0=\infty$ and there exists a unique solution of \eqref{ode}, 
\eqref{initial-cond},
in $(0,\infty)$.

\noindent $\underline{\text{\bf Case 2}}$: $\alpha<0$.

\noindent By Lemma \ref{h1-lower-bd},
\begin{equation}\label{v'-v-bd}
0<v'(r)\le\frac{v}{|k|r}\quad\mbox{ in }(0,R_0).
\end{equation}
By \eqref{v'-v-bd} and an argument similar to the proof of case 2 of 
Theorem 1.3 of \cite{Hs}, there exists a constant $C>0$ such that
\begin{equation*}
0<v'(r)\le Cv(r)\quad\forall 0\le r<R_0.
\end{equation*}
Then
\begin{equation}\label{v0-v-bd}
v(0)\le v(r)\le v(0)\mbox{exp}\,(CR_0)\quad\forall 0\le r<R_0
\end{equation}
and 
\begin{equation}\label{v'-v0-bd}
0<v'(r)\le Cv(0)\mbox{exp}\,(CR_0)\quad\forall 0\le r<R_0.
\end{equation}
By \eqref{v0-v-bd} and \eqref{v'-v0-bd} , a contradiction 
arises. Hence no such sequence $\{r_i\}_{i=1}^{\infty}$ exists. Thus 
$R_0=\infty$ and there exists a unique solution of \eqref{ode}, 
\eqref{initial-cond}, in $(0,\infty)$.
By case 1, case 2, and \eqref{w'-eqn} the lemma follows.
\end{proof}

\begin{thm}\label{m-to-0-thm}
Let $\eta>0$ and $m$, $n$, $\alpha$, $\beta$, satisfy \eqref{m-range2}
and \eqref{alpha-beta-relation2} and let $v^{(m)}$ be the radially symmetric 
solution of \eqref{elliptic-eqn}. Then $v^{(m)}$ converges uniformly on 
every compact subset of $\R^n$ to the solution of \eqref{log-eqn} as 
$m\to 0$.
\end{thm}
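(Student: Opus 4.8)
The plan is to establish the convergence of $v^{(m)}$ to $u$ as $m\to 0$ through a compactness argument. First I would derive $m$-uniform a priori bounds for $v^{(m)}$ on each compact set. From Theorem~\ref{existence-thm} and Lemma~\ref{h1-lower-bd} I already know that for $\alpha>0$ the solution is monotone decreasing with $0<v^{(m)}(r)\le\eta$, for $\alpha<0$ it is increasing, and for $\alpha=0$ it is the constant $\eta$; in each case $w_1(r)=r^2(v^{(m)})^{2k}$ is increasing. The key estimate to extract is a bound on $v^{(m)}$ that is bounded away from $0$ and $\infty$, \emph{uniformly in $m$}, on each compact $[0,R]$. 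For the upper bound when $\alpha>0$ the bound $\eta$ suffices; for $\alpha<0$ one needs to check that the exponential growth rate in \eqref{v0-v-bd} stays controlled as $m\to 0$, and for $\alpha=0$ it is immediate.

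Next I would rewrite \eqref{ode} in the divergence form already appearing in the proof of Theorem~\ref{existence-thm}, namely
\begin{equation*}
(n-1)(v^m/m)'=-\beta rv+\frac{(n\beta-\alpha)}{r^{n-1}}\int_0^r\rho^{n-1}v(\rho)\,d\rho,
\end{equation*}
and observe that as $m\to 0$ the quantity $v^m/m$ is the natural object: formally $\partial_m(v^m/m)$ relates $v^m/m$ to $\log v$ in the limit. Indeed $(n-1)(v^m)'/m = (n-1)v^{m-1}v' \to (n-1)v^{-1}v' = (n-1)(\log v)'$ as $m\to 0$, which is exactly the gradient term in \eqref{log-eqn}. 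Using the $m$-uniform bounds I would show the right-hand side above is bounded uniformly in $m$ on $[0,R]$, hence $(\log v^{(m)})'$ is uniformly bounded, giving equicontinuity of $\log v^{(m)}$ and, together with the pointwise bounds, of $v^{(m)}$ itself. By Arzel\`a--Ascoli, a subsequence $v^{(m_j)}$ converges uniformly on $[0,R]$ to some limit $u$, and a diagonal argument over $R\nearrow\infty$ gives uniform convergence on compact subsets.

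I would then pass to the limit in the integrated equation. The pointwise convergence $v^{(m_j)}\to u$ together with the uniform bounds lets me pass to the limit in $-\beta rv+\frac{(n\beta-\alpha)}{r^{n-1}}\int_0^r\rho^{n-1}v\,d\rho$ by dominated convergence, while the left-hand side $(n-1)(v^{m_j}/m_j)'=(n-1)(v^{m_j})'/m_j$ converges to $(n-1)(\log u)'$; reversing the divergence-form manipulation shows $u$ solves \eqref{log-eqn} with $u(0)=\eta$. Since \eqref{log-eqn} has a unique solution (again by standard O.D.E. uniqueness for the radial equation), every subsequential limit equals $u$, so the full family converges. The main obstacle I anticipate is making the singular limit of $v^m/m$ rigorous while keeping all estimates uniform in $m$ near $m=0$: one must ensure the lower bound $v^{(m)}\ge c(R)>0$ does not degenerate as $m\to 0$ (so that $\log v^{(m)}$ stays bounded and $v^{1-m}$, $v^{m-1}$ remain controlled), and one must justify that the constant $C$ appearing in the $\alpha<0$ estimate \eqref{v0-v-bd}--\eqref{v'-v0-bd} can be chosen independent of $m$. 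Establishing these $m$-uniform bounds is the crux; once they are in hand, the compactness and limit-passage steps are routine.
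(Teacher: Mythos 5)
There is a genuine gap: your proof defers exactly the step on which everything hinges. You write that the crux is to show $v^{(m)}\ge c(R)>0$ uniformly in $m$ on each compact set (and, for $\alpha<0$, that the growth constant is $m$-independent), and then say that once these bounds are in hand the rest is routine --- but you never indicate how to obtain them, and in fact no direct a priori proof of the uniform lower bound is given in the paper either, because the paper's argument is structured precisely to avoid needing it. For $\alpha>0$ the paper gets equicontinuity \emph{without} any lower bound: from the integrated equation and $0<v^{(m)}\le\eta$ one gets the $m$-uniform Lipschitz estimate $(n-1)\bigl|\tfrac{d}{dr}v^{(m)}\bigr|\le\bigl(\beta+\tfrac{|n\beta-\alpha|}{n}\bigr)\eta^{2-m}r_0$ on $[0,r_0]$ (note this bounds $v'$ itself, not $(\log v^{(m)})'$, so no positivity is needed). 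After extracting a uniform limit $u$ by Ascoli, the limit could a priori vanish at some finite radius $R_1$; the paper handles this by working on the maximal interval $(0,R_1)$ where $u>0$, using the mean value theorem to show $(v^{(m_i)m_i}-1)/m_i\to\log u$ there, concluding that $u$ solves the log equation on $(0,R_1)$, and then invoking the existence and uniqueness of a \emph{global positive} solution of that equation (Theorem 1.3 of \cite{Hs}) to force $u(R_1)>0$, a contradiction, whence $R_1=\infty$. This uniqueness-plus-global-positivity input from \cite{Hs} is the idea that replaces the uniform lower bound you postulate; your appeal to ``standard O.D.E. uniqueness'' does not supply it, since what is needed is not just uniqueness but that the unique solution of \eqref{log-eqn} is positive for all $r$.

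For $\alpha<0$ the worry you raise (that the constant $C$ in \eqref{v0-v-bd} may blow up as $m\to0$) is the right one, but again you leave it unresolved, and the paper resolves it by a different mechanism: it uses the $m$-independent inequality $0<v'\le v/(|k|r)$ from Lemma \ref{h1-lower-bd} (note $k=\beta/\alpha$ does not involve $m$), combined with a bootstrap near the origin (choosing $m_0$ so that $(2\eta)^m\le 2$ and showing $v^{(m)}\le 2\eta$ on a fixed interval $[0,r_0]$ independent of $m$) to get the polynomial, $m$-uniform growth bound $\eta\le v^{(m)}(r)\le 2\eta(r/r_0)^{1/|k|}$, rather than the exponential bound of Theorem \ref{existence-thm}. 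In short, your compactness skeleton matches the paper's, but the decisive estimates and the mechanism for positivity of the limit are exactly the parts you have not supplied, so the proposal as written does not constitute a proof.
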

\begin{proof}
If $\alpha=0$, then $v^{(m)}\equiv\eta$ on $\R^n$ which satisfies 
\eqref{log-eqn} and we are done. Hence we may assume that $\alpha\ne 0$.
Then by \eqref{m-range2} and \eqref{alpha-beta-relation2} there exists 
a constant $m_0'\in (0,(n-2)/n)$ such that \eqref{alpha-beta-relation} holds 
for any $0<m\le m_0'$. Without loss of generality we may assume that 
$0<m\le m_0'$ in the proof. Note that $v^{(m)}(x)=v^{(m)}(|x|)$ satisfies 
\eqref{ode} and \eqref{initial-cond} in $(0,\infty)$. Let 
$\{m_i\}_{i=1}^{\infty}$ be a sequence such that $0<m_i<m_0'$ 
for all $i\in\Z^+$ and $m_i\to 0$ as $i\to\infty$. We now divide the proof 
into two cases.

\noindent $\underline{\text{\bf Case 1}}$: $\alpha>0$.

By the proof of Theorem \ref{existence-thm},
$v^{(m)}$ satisfies \eqref{v-upper-bd}, \eqref{vm'-bd}, and \eqref{v'-bd} in
$(0,\infty)$. Hence
\begin{equation}\label{v^{(m)}-bd}
0<v^{(m)}(r)\le\eta\quad\forall r\ge 0, 
\end{equation} 
\begin{align}\label{v^{(m)m}/m'-bd}
&(n-1)(v^{(m)m}/m)'=-\beta rv^{(m)}(r)+\frac{(\beta n-\alpha)}{r^{n-1}}
\int_0^r\rho^{n-1}v^{(m)}(\rho)\,d\rho\quad\mbox{ in }(0,\infty)\notag\\
\Rightarrow\quad&\frac{v^{(m)}(r)^m-1}{m}-\frac{\eta^m-1}{m}\notag\\
=&-\frac{\beta}{n-1}\int_0^r\rho v^{(m)}(\rho)\,d\rho +\frac{n\beta-\alpha}{n-1}
\int_0^r\frac{1}{\sigma^{n-1}}\left(\int_0^{\sigma}\rho^{n-1}v^{(m)}(\rho)
\,d\rho\right)\,d\sigma\quad\mbox{ in }(0,\infty),
\end{align} 
and for any $r_0>0$,
\begin{align}\label{v^{(m)}'-bd}
&(n-1)\left|\frac{d}{d r}v^{(m)}(r)\right|\le\left(\beta
+\frac{|n\beta-\alpha|}{n}\right)\eta^{2-m}r_0\quad\forall 0\le r\le r_0\notag\\
\Rightarrow\quad&|v^{(m)}(r_1)-v^{(m)}(r_2)|\le C_1r_0|r_1-r_2|
\quad\forall 0\le r_1,r_2\le r_0
\end{align} 
where
$$
C_1=(n-1)^{-1}(2\beta+(|\alpha|/n))\max(1,\eta)^2.
$$
By \eqref{v^{(m)}-bd} and \eqref{v^{(m)}'-bd}, the 
sequence $\{v^{(m_i)}\}_{i=1}^{\infty}$ is equi-Holder continuous on every 
compact subset of $[0,\infty)$. By the Ascoli Theorem the sequence 
$\{v^{(m_i)}\}_{i=1}^{\infty}$ has a subsequence which we may assume without loss
of generality to be the sequence itself that converges uniformly on every
compact subset of $[0,\infty)$ to some continuous function $u$  
as $i\to\infty$ and $u(0)=\eta$. By \eqref{v^{(m)}'-bd},
\begin{align*}
&|u(r)-u(0)|\le C_1r_0r\quad\forall 0<r\le r_0\\
\Rightarrow\quad&\limsup_{r\to 0}\left|\frac{u(r)-u(0)}{r}\right|\le Cr_0
\quad\forall r_0>0\\
\Rightarrow\quad&\lim_{r\to 0}\left|\frac{u(r)-u(0)}{r}\right|=0\quad\mbox{ as }
r_0\to 0.
\end{align*}
Hence $u$ is differentiable at $r=0$ with $u'(0)=0$. Thus
\begin{equation}\label{u-initial-cond}
u(0)=\eta,\quad u'(0)=0
\end{equation}
hold. By \eqref{v^{(m)}'-bd},
\begin{align}\label{u-lower-bd}
&v^{(m)}(r)\ge v^{(m)}(0)-(\eta/2)=\eta/2\quad\forall 0\le r\le
\min (1,\eta/(2C_1))\notag\\
\Rightarrow\quad&u(r)\ge\eta/2\quad\forall 0\le r\le
\min (1,\eta/(2C_1))\quad\mbox{ as }m=m_i\to\infty.
\end{align}
By \eqref{u-lower-bd} there exists a maximal interval $(0,R_1)$ such that 
$u(r)>0$ in $(0,R_1)$. Suppose $R_1<\infty$. Then $u(R_1)=0$. For any 
$0<\delta<R_1$, since
\begin{equation*}
\inf_{0\le r\le R_1-\delta}u(r):=c_0>0,
\end{equation*}
there exists $i_0\in\Z^+$ such that 
\begin{equation}\label{v^{(m)}-lower-bd}
v^{(m_i)}(r)\ge c_0/2\quad\forall 0\le r\le R_1-\delta, i\ge i_0.
\end{equation}
By \eqref{v^{(m)}-bd}, \eqref{v^{(m)}-lower-bd}, and the mean value theorem,
\begin{align}\label{v^{(m)}/m-limit}
\left|\frac{v^{(m_i)}(r)^{m_i}-1}{m_i}-\log u(r)\right|
=&|e^{\xi_i}\log v^{(m_i)}-\log u(r)|\notag\\
\le&e^{\xi_i}|\log v^{(m_i)}-\log u(r)|+|e^{\xi_i}-1||\log u(r)|\notag\\
\le&e^{m_iM}|\log v^{(m_i)}-\log u(r)|+|e^{\xi_i}-1||\log u(r)|
\notag\\
\to&0\quad\mbox{ uniformly on }[0,R_1-\delta]
\quad\mbox{ as }i\to\infty
\end{align}
for some $\xi_i$ satisfying $|\xi_i|\le m_iM$ for any $i\in\Z^+$
where $M=\max(|\log\eta|,|\log (c_0/2)|)$.
Putting $m=m_i$ in \eqref{v^{(m)m}/m'-bd} and letting $i\to\infty$, by 
\eqref{v^{(m)}/m-limit}, 
\begin{equation}\label{log-integral-eqn}
(n-1)\log u(r)=-\beta\int_0^r\rho u(\rho)\,d\rho +(n\beta-\alpha)
\int_0^r\frac{1}{\sigma^{n-1}}\left(\int_0^r{\sigma}^{n-1}u(\rho)\,d\rho\right)
\,d\sigma\quad\mbox{ in }(0,R_1).
\end{equation}
Since the right hand side of \eqref{log-integral-eqn} is a differentiable 
function of $r\in [0,R_1)$, $u(r)$ is a differentiable 
function of $r\in [0,R_1)$. Differentiating \eqref{log-integral-eqn} with
respect to $r$,
\begin{align}
&(n-1)\frac{u'(r)}{u(r)}=-\beta ru(r)+\frac{n\beta-\alpha}{r^{n-1}}
\int_0^r\rho^{n-1}u(\rho)\,d\rho\quad\mbox{ in }(0,R_1)
\label{log-integral-eqn2}\\
\Rightarrow\quad&(n-1)\left(\frac{r^{n-1}u'}{u}\right)
=-\alpha\int_0^r\rho^{n-1}u(\rho)\,d\rho-\beta\int_0^r\rho^nu'(\rho)\,d\rho
\quad\mbox{ in }(0,R_1)\label{log-integral-eqn3}.
\end{align} 
Since the right hand side of \eqref{log-integral-eqn3} is a differentiable 
function of $r\in [0,R_1)$, $u'(r)/u(r)$ is a differentiable 
function of $r\in [0,R_1)$. Differentiating \eqref{log-integral-eqn3} with
respect to $r$,
\begin{align}\label{log-eqn2}
&(n-1)\left(\frac{u'}{u}\right)'+\frac{n-1}{r}\frac{u'}{u}
=(n-1)\frac{1}{r^{n-1}}\left(\frac{r^{n-1}u'}{u}\right)'
=-\alpha u-\beta ru'\quad\mbox{ in }(0,R_1).
\end{align} 
Hence $u$ is a classical solution of \eqref{log-eqn2} and satisfies 
\eqref{u-initial-cond}.
By Theorem 1.3 of \cite{Hs} and a rescaling there exists a unique positive 
solution $\2{u}$ of \eqref{log-eqn2} in $[0,\infty)$ that satisfies 
\eqref{u-initial-cond}. By uniqueness of solution,
\begin{equation*}
u(r)\equiv\2{u}(r)\quad\forall 0\le r\le R_1\quad\Rightarrow\quad
u(R_1)\equiv\2{u}(R_1)>0
\end{equation*}
and contradiction arises. Hence $R_1=\infty$ and $u(r)>0$ for all $r\ge 0$.
By the above argument the solution $u$ satisfies \eqref{log-eqn2} and 
\eqref{u-initial-cond} and $u(r)\equiv\2{u}(r)$ is the a unique positive 
solution $\2{u}$ of \eqref{log-eqn2} in $[0,\infty)$ that satisfies 
\eqref{u-initial-cond}. Since the sequence $\{m_i\}_{i=1}^{\infty}$ is 
arbitrary, $v^{(m)}$ converges uniformly on every compact subset of $\R^n$
to the solution $u$ of \eqref{log-eqn} as $m\to 0$.

\noindent $\underline{\text{\bf Case 2}}$: $\alpha<0$.

By the proof of Theorem \ref{existence-thm},
$v^{(m)}$ satisfies \eqref{vm'-bd} and \eqref{v'-v-bd}  in $(0,\infty)$.
We choose $m_0\in (0,m_0']$ such that 
\begin{equation}\label{m-bd}
\frac{1}{2}\le (2\eta)^m\le 2\quad\forall 0<m\le m_0
\end{equation}
and let $r_0=\min((8C_2\eta)^{-\frac{1}{2}},(8C_2\eta)^{-1})$ where 
$C_2=(2\beta+(|\alpha|/n))/(n-1)$.
Let 
$$
r_m=\sup\{\delta'>0:v(r)\le 2\eta\quad\forall 0\le r\le\delta'\}.
$$
By \eqref{initial-cond}, $r_m>0$. We claim that 
\begin{equation}\label{rm-lower-bd}
r_m\ge r_0\quad\forall 0<m\le m_0.
\end{equation} 
Suppose \eqref{rm-lower-bd} does not hold. Then there exists $m'\in (0,m_0]$ 
such that $r_{m'}<r_0$. Then by \eqref{vm'-bd} and \eqref{m-bd},
\begin{align}
\left|\frac{dv^{(m')}}{dr}(r)\right|
\le&\frac{1}{n-1}\left(\beta rv(r)+\frac{n\beta-\alpha}{r^{n-1}}
\int_0^r\rho^{n-1}v(\rho)\,d\rho\right)v(r)^{1-m'}\quad\forall 0\le r\le r_{m'}
\notag\\
\Rightarrow\quad\left|\frac{dv^{(m')}}{dr}(r)\right|
\le&(n-1)^{-1}(2\beta +(|\alpha|/n))(2\eta)^{2-m'}r=8C_2\eta^2r
\quad\forall 0\le r\le r_{m'}\label{v'-bd20}\\
\Rightarrow\qquad v^{(m')}(r)\le&\eta+4C_2\eta^2r_0^2\le 3\eta/2
\quad\forall 0\le r\le r_{m'}.\label{v^{(m)}-bd2}
\end{align}
By \eqref{v^{(m)}-bd2} and continuity there exists a constant $\delta_1>0$ 
such that $v^{(m')}(r)\le 2\eta$ in $[0,r_{m'}+\delta_1]$. This contradicts the 
choice of $r_{m'}$. Hence no such $m'$ exists and \eqref{rm-lower-bd} holds.
By \eqref{v'-v-bd} and \eqref{v'-bd20},
\begin{align}
&0\le\frac{dv^{(m)}}{dr}(r)\le 8C_2\eta^2r_0=\eta\quad\forall 0\le r\le r_0,
0<m\le m_0\label{v^{(m)}'-bd3}\\
\Rightarrow\quad&\eta\le v^{(m)}(r)\le 2\eta\quad\forall 0\le r\le r_0,
0<m\le m_0\label{v^{(m)}-bd3}.
\end{align}
By \eqref{v'-v-bd} and \eqref{v^{(m)}-bd3},
\begin{align}\label{v^{(m)}-bd4}
&v^{(m)}(r_0)\le v^{(m)}(r)\le v^{(m)}(r_0)(r/r_0)^{\frac{1}{|k|}}\quad\forall
r\ge r_0,0<m\le m_0\notag\\
\Rightarrow\quad&\eta\le v^{(m)}(r)\le 2\eta(r/r_0)^{\frac{1}{|k|}}\quad\forall
r\ge r_0,0<m\le m_0.
\end{align}
By \eqref{v'-v-bd}, \eqref{v^{(m)}'-bd3}, \eqref{v^{(m)}-bd3} and 
\eqref{v^{(m)}-bd4}, for any $r_1>0$ there exists a constant $M_{r_1}>0$ 
such that
\begin{equation}\label{v-bd5}
\left\{\begin{aligned}
&0\le\frac{dv^{(m)}}{dr}(r)\le M_{r_1}\quad\forall 0\le r\le r_1,0<m\le m_0\\
&\eta\le v^{(m)}(r)\le M_{r_1}\quad\forall 0\le r\le r_1,0<m\le m_0.
\end{aligned}\right.
\end{equation}
By \eqref{v-bd5} the sequence 
$\{m_i\}_{i=1}^{\infty}$ is equi-Holder continuous on every compact subset of
$[0,\infty)$. By the Ascoli theorem the sequence $\{m_i\}_{i=1}^{\infty}$  
has a subsequence which we may assume without loss of generality to be the
sequence itself that converges uniformly to some continuous function
$u$ on every compact subset of $[0,\infty)$ as $i\to\infty$. By an argument 
similar to the proof of case 1 $u$ is the a unique positive 
solution $\2{u}$ of \eqref{log-eqn2} in $[0,\infty)$ that satisfies 
\eqref{u-initial-cond}. Since the sequence $\{m_i\}_{i=1}^{\infty}$ is 
arbitrary, $v^{(m)}$ converges uniformly on every compact subset of $\R^n$
to the solution $u$ of \eqref{log-eqn} as $m\to 0$ and the theorem follows.
\end{proof}

\section{Exact decay rate for $\alpha=\frac{2\beta}{1-m}>0$}
\setcounter{equation}{0}
\setcounter{thm}{0}

In this section we will prove the exact decay rate \eqref{decay-rate2} for the 
radially symmetric solution $v$ of \eqref{elliptic-eqn} when \eqref{m-range} 
and \eqref{alpha-beta-relation3} hold. We let 
\begin{equation*}
h(r)=v+\frac{1-m}{2}rv'(r)\quad\mbox{ and }\quad w(r)=r^2v(r)^{1-m}.
\end{equation*}

\begin{lem}\label{h-lower-bd}
Let $\eta>0$, and $\alpha$, $\beta$, $m$, satisfies \eqref{m-range2} and
\begin{equation}\label{alpha-beta-5}
\frac{2\beta}{1-m}\ge\alpha>0.
\end{equation} 
Let $v$ be the radially symmetric solution of \eqref{elliptic-eqn}. Then 
$h(r)>0$ for any $r\ge 0$ and $w'(r)>0$ for any $r>0$. 
\end{lem}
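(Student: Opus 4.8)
The plan is to reduce both assertions to the single positivity statement $h(r)>0$, and then to obtain that positivity by comparison with the quantity $h_1$ controlled in Lemma \ref{h1-lower-bd}. First I would record the elementary identity obtained by differentiating $w(r)=r^2v(r)^{1-m}$, namely $w'(r)=2rv^{1-m}+(1-m)r^2v^{-m}v'=2rv^{-m}\bigl(v+\tfrac{1-m}{2}rv'\bigr)=2rv^{-m}h(r)$. Since $v>0$, the factor $2rv^{-m}$ is strictly positive for every $r>0$, so $w'(r)>0$ is \emph{equivalent} to $h(r)>0$ on $(0,\infty)$; moreover $h(0)=v(0)=\eta>0$. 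Thus it suffices to prove $h(r)>0$ for all $r>0$.

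Next I would verify that the present hypotheses feed into the earlier results. From \eqref{alpha-beta-5} we have $\alpha>0$, and since $1-m>0$ the inequality $\tfrac{2\beta}{1-m}\ge\alpha>0$ forces $\beta>0$; combining \eqref{alpha-beta-5} with \eqref{m-range2} gives $\tfrac{m\alpha}{\beta}\le\tfrac{2m}{1-m}\le n-2$, where the last step is exactly $m\le(n-2)/n$. Hence both \eqref{alpha-beta-relation4} and \eqref{alpha-beta-relation} hold, so Theorem \ref{existence-thm} supplies a global radial solution $v$ on $[0,\infty)$, and Lemma \ref{h1-lower-bd}, applied with $k=\beta/\alpha$, yields $h_1(r)=v+krv'>0$ on $[0,\infty)$ together with $v'(r)<0$ on $(0,\infty)$ (the case $\alpha>0$).

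Finally comes the comparison, which is the crux. Because $\alpha\le\tfrac{2\beta}{1-m}$ we have $k=\beta/\alpha\ge\tfrac{1-m}{2}>0$, and because $v'(r)<0$ for $r>0$ the product $rv'(r)$ is negative there. Writing $h(r)-h_1(r)=\bigl(\tfrac{1-m}{2}-k\bigr)rv'(r)$, the first factor is $\le0$ and the second is $<0$, so their product is $\ge0$; hence $h(r)\ge h_1(r)>0$ for every $r>0$. Together with $h(0)=\eta>0$ this gives $h>0$ on $[0,\infty)$, and then $w'(r)=2rv^{-m}h(r)>0$ for all $r>0$, completing both assertions.

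The step I expect to require the most care is the sign bookkeeping in the comparison: the conclusion hinges on having simultaneously $v'<0$ (which needs $\alpha>0$) and $k\ge(1-m)/2$ (which needs $\alpha\le 2\beta/(1-m)$), so that multiplying the negative quantity $rv'$ by the nonpositive factor $\tfrac{1-m}{2}-k$ flips the inequality in the correct direction. An alternative to the comparison would be to derive, in parallel with \eqref{h1-eqn}, a first-order linear ODE for $h$ itself with a nonnegative right-hand side and integrate it against a suitable integrating factor of the form $r^{a}f(r)$; in that route the hypothesis $\alpha\le 2\beta/(1-m)$ is precisely what keeps the relevant coefficient nonnegative, but invoking Lemma \ref{h1-lower-bd} avoids repeating that computation.
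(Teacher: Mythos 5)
Your proof is correct, but it follows a genuinely different route from the paper's. The paper proves $h>0$ by the same device it used for $h_1$ in Lemma \ref{h1-lower-bd}: it computes a first-order linear ODE for $h$ itself, namely \eqref{h-eqn}, whose right-hand side $\frac{n-2-mn}{1-m}\cdot\frac{v}{r}+\frac{1}{n-1}\bigl(\frac{2\beta}{1-m}-\alpha\bigr)rv^{2-m}$ is nonnegative precisely because of \eqref{m-range2} and \eqref{alpha-beta-5}, and then integrates against the factor $r^{\frac{n-2-mn}{1-m}}f(r)$ with $f$ as in \eqref{f-defn} to conclude $h>0$, and hence $w'=2rv^{-m}h>0$; this is exactly the ``alternative'' you sketch in your final sentence. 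Your argument instead reuses both conclusions of Lemma \ref{h1-lower-bd} ($h_1=v+krv'>0$ and $v'<0$ for $\alpha>0$) and observes that $h-h_1=\bigl(\tfrac{1-m}{2}-k\bigr)rv'\ge 0$ since $k=\beta/\alpha\ge\tfrac{1-m}{2}$, so $h\ge h_1>0$; your verification that \eqref{alpha-beta-5} and \eqref{m-range2} imply \eqref{alpha-beta-relation4} and \eqref{alpha-beta-relation} (via $\tfrac{m\alpha}{\beta}\le\tfrac{2m}{1-m}\le n-2$) is also accurate. Your comparison is shorter and avoids any new computation, at the cost of being tied to the regime $\alpha>0$ where $v'<0$ and $k$ is defined (which is all the lemma requires); the paper's ODE-plus-integrating-factor computation is self-contained, exhibits directly where the hypothesis $\alpha\le\frac{2\beta}{1-m}$ enters as a sign condition, and keeps the proof structurally parallel to Lemma \ref{h1-lower-bd}, which is the template the paper keeps exploiting in the later decay-rate analysis.
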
  
\begin{proof}
By direct computation,
\begin{align}\label{h-eqn}
&h'(r)+\left(\frac{n-2-mn}{(1-m)r}-(1-m)\frac{v'}{v}+\frac{\beta}{(n-1)}rv^{1-m}
\right)h\notag\\
=&\frac{n-2-mn}{1-m}\cdot\frac{v}{r}
+\frac{1}{n-1}\left(\frac{2\beta}{1-m}-\alpha\right)rv^{2-m}\ge 0\quad
\forall r>0.
\end{align}
Let $f$ be given by \eqref{f-defn}. Then by \eqref{h-eqn},
$$
(r^{\frac{n-2-mn}{1-m}}f(r)h(r))'\ge 0\quad\Rightarrow\quad h(r)>0
\quad\forall r\ge 0.
$$
Hence
$$
w'(r)=2rv(r)^{-m}h(r)>0\quad\forall r>0
$$
and the lemma follows.
\end{proof}

Let $\eta>0$, and let $m$, $\alpha$, $\beta$, $\rho_1$, satisfy 
\eqref{alpha-beta-1} and \eqref{m-range}. Suppose $v$ is a radially 
symmetric solution of \eqref{elliptic-eqn}. Let $s=\log r$ and $v_1
=w^{\frac{1}{1-m}}$. Then $v_1$ satisfies
\begin{equation}\label{v1-eqn}
(v_1^m)_{ss}+\frac{n-2-(n+2)m}{1-m}(v_1^m)_s-\frac{2m(n-2-nm)}{(1-m)^2}v_1^m
+\frac{m\beta}{n-1}v_{1,s}+\frac{m\rho_1}{(1-m)(n-1)}v_1=0
\end{equation}
in $(-\infty,\infty)$ and $w$ satisfies
\begin{equation}\label{w1-eqn}
w_{ss}=\frac{1-2m}{1-m}\cdot\frac{w_s^2}{w}
-\frac{n-2-(n+2)m}{1-m}w_s-\frac{\beta}{n-1}ww_s
-\frac{\rho_1}{n-1}w^2+\frac{2(n-2-nm)}{1-m}w
\end{equation}
in $(-\infty,\infty)$ or equivalently
\begin{equation}\label{w1-r-eqn}
w_{rr}+\left(1+\frac{n-2-(n+2)m}{1-m}\right)\frac{w_r}{r}
-\frac{1-2m}{1-m}\cdot\frac{w_r^2}{w}
+\frac{\beta}{n-1}\frac{ww_r}{r}
+\frac{\rho_1}{n-1}\frac{w^2}{r^2}-\frac{2(n-2-nm)}{1-m}\frac{w}{r^2}=0
\end{equation}
in $(0,\infty)$. When $\rho_1=0$, \eqref{v1-eqn}, \eqref{w1-eqn},
and \eqref{w1-r-eqn} reduce
to
\begin{equation}\label{v1-eqn2}
(v_1^m)_{ss}+\frac{n-2-(n+2)m}{1-m}(v_1^m)_s
-\frac{2m(n-2-nm)}{(1-m)^2}v_1^m+\frac{m\beta}{n-1}v_{1,s}=0
\quad\mbox{ in }(-\infty,\infty),
\end{equation}
\begin{equation}\label{w1-eqn2}
w_{ss}=\frac{1-2m}{1-m}\cdot\frac{w_s^2}{w}
-\frac{n-2-(n+2)m}{1-m}w_s-\frac{\beta}{n-1}ww_s
+\frac{2(n-2-nm)}{1-m}w\quad\mbox{ in }(-\infty,\infty)
\end{equation}
and
\begin{equation}\label{w1-r-eqn2}
w_{rr}+\left(1+\frac{n-2-(n+2)m}{1-m}\right)\frac{w_r}{r}
-\frac{1-2m}{1-m}\cdot\frac{w_r^2}{w}
+\frac{\beta}{n-1}\frac{ww_r}{r}
-\frac{2(n-2-nm)}{1-m}\frac{w}{r^2}=0
\end{equation}
in $(0,\infty)$.

\begin{lem}\label{w1s-bd}
Let $\eta>0$ and let $m$, $\alpha$, $\beta$, satisfy \eqref{m-range} and
\eqref{alpha-beta-relation3}. Let $v$ be the radially symmetric solution of 
\eqref{elliptic-eqn}. Then there exist constants $C_1>0$, $C_2>0$, $C_3>0$, 
such that
\begin{equation}\label{rw'-w-upper-bd}
\frac{rw_r(r)}{w(r)}\le C_1\quad\forall r\ge 0
\end{equation}
and
\begin{equation}\label{r-w1'-lower-bd}
C_2\le rw_r(r)\le C_3\quad\forall r\ge 1.
\end{equation}
Moreover 
\begin{equation}\label{w1-limit}
w(r)\to\infty\quad\mbox{ as }r\to\infty.
\end{equation}
\end{lem}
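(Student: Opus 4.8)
The plan is to pass to the variable $s=\log r$ and study $p:=w_s=rw_r$ through the equation that $w$ satisfies. Comparing \eqref{alpha-beta-relation3} with \eqref{alpha-beta-1} we have $\rho_1=0$, so $w$ satisfies \eqref{w1-eqn2}; written in terms of $p$ this reads
\[
p_s=a\,\frac{p^2}{w}-b\,p-c\,wp+d\,w,\qquad
a=\frac{1-2m}{1-m},\ b=\frac{n-2-(n+2)m}{1-m},\ c=\frac{\beta}{n-1},\ d=\frac{2(n-2-nm)}{1-m},
\]
where $c>0$ and, by \eqref{m-range}, $d>0$. Since $d/c=\frac{2(n-1)(n-2-nm)}{\beta(1-m)}$ is precisely the constant appearing in \eqref{decay-rate2}, one expects $p\to d/c$, and \eqref{rw'-w-upper-bd}--\eqref{r-w1'-lower-bd} are the quantitative bounds underlying this convergence.

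First I would establish \eqref{rw'-w-upper-bd}. As in the proof of Lemma \ref{h-lower-bd} one has $w_r=2rv^{-m}h$, so $\frac{rw_r}{w}=\frac{2h}{v}$. Because \eqref{alpha-beta-relation3} gives $\frac{m\alpha}{\beta}=\frac{2m}{1-m}$, which is $<n-2$ by \eqref{m-range}, hypothesis \eqref{alpha-beta-relation4} holds and Lemma \ref{h1-lower-bd} yields $v'<0$ on $(0,\infty)$; hence $h=v+\frac{1-m}{2}rv'<v$, while $h>0$ by Lemma \ref{h-lower-bd}. Therefore $0<\frac{rw_r}{w}<2$ for $r>0$ (the ratio tending to $2$ as $r\to0$), which is \eqref{rw'-w-upper-bd} with $C_1=2$. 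The two consequences $p\le 2w$ and $q:=p/w\le2$ are the key inputs for the trapping arguments below.

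Next I would obtain the upper bound in \eqref{r-w1'-lower-bd} by a trapping argument for $p$. Using $q\le2$ and $p\le2w$, the indefinite-sign terms satisfy $a\frac{p^2}{w}-bp=aqp-bp\le(2|a|+|b|)p\le(4|a|+2|b|)w$, whereas $-cwp+dw=w(d-cp)$. Hence for $M>\frac{d+4|a|+2|b|}{c}$ we get $p_s\le\big(4|a|+2|b|-(cM-d)\big)w<0$ whenever $p\ge M$, so the set $\{p\ge M\}$ can only be exited downward; a comparison argument then gives $p(s)\le\max(p|_{r=1},M)=:C_3$ for all $r\ge1$. The lower bound and \eqref{w1-limit} follow from the reverse argument: for $0<\mu<d/c$ one has, whenever $p\le\mu$, $p_s\ge-(2|a|+|b|)\mu+(d-c\mu)w$, and since $w$ is increasing (Lemma \ref{h-lower-bd}) one has $w\ge w|_{r=1}>0$. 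Choosing $\mu$ so small that $(d-c\mu)\,w|_{r=1}>(2|a|+|b|)\mu$ forces $p_s>0$ on $\{p\le\mu\}$, so this set can only be exited upward and $p(s)\ge\min(p|_{r=1},\mu)=:C_2>0$ for $r\ge1$; integrating $w_s=p\ge C_2$ gives $w(s)\ge w|_{r=1}+C_2\,s\to\infty$, which is \eqref{w1-limit}.

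The main difficulty is that $a$ and $b$ are of indefinite sign on the range $0<m<(n-2)/n$ — for example $b>0$ exactly when $m<(n-2)/(n+2)$, and $1-2m$ may be negative — so the quadratic and linear terms cannot simply be dropped. The estimate \eqref{rw'-w-upper-bd} is exactly what saves the argument: through $q\le2$ it bounds both offending terms by a fixed multiple of $w$, after which the sign of $p_s$ is dictated by the favourable term $w(d-cp)$ with $d>0$. The only genuinely delicate point is the selection of $\mu$: it must be taken small enough (in terms of $a,b,c,d$ and $w|_{r=1}$) that the threshold $\frac{(2|a|+|b|)\mu}{d-c\mu}$ lies below $w|_{r=1}$; since $w$ is monotone this single threshold then serves for all $r\ge1$ at once, so that \eqref{w1-limit} need not be proved separately before the lower bound.
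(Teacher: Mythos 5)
Your proof is correct, and it differs from the paper's own argument in a worthwhile way. The paper proves \eqref{rw'-w-upper-bd} by studying the Riccati-type equation \eqref{v1-eqn2} satisfied by $v_1^m=w^{m/(1-m)}$: it splits into the cases $b_0\ge 0$ and $b_0<0$, where $b_0=\frac{n-2-(n+2)m}{1-m}$, and in the second case runs a trapping argument for $(v_1^m)_s/v_1^m$, producing a constant $C_1$ depending on $b_0,b_1,m$. Your observation that $\frac{rw_r}{w}=\frac{2h}{v}$ combined with $0<h<v$ (Lemma \ref{h-lower-bd} gives $h>0$; Lemma \ref{h1-lower-bd} gives $v'<0$, and you correctly check its hypothesis \eqref{alpha-beta-relation4} from \eqref{alpha-beta-relation3} and \eqref{m-range}) replaces all of that by two lines and yields the sharp explicit constant $C_1=2$. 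For the bounds $C_2\le rw_r\le C_3$, both you and the paper run trapping (invariant-region) arguments on \eqref{w1-eqn2} in the variable $s=\log r$, but the logical organization differs: the paper proves the lower bound first (via a contradiction step producing a starting time $s_1$), deduces \eqref{w1-limit}, and then \emph{needs} \eqref{w1-limit} to absorb the terms $\frac{1-2m}{1-m}\frac{w_s^2}{w}$ and $b_0w_s$ when proving the upper bound; you instead use $\frac{w_s}{w}\le 2$ and $w_s\le 2w$ to dominate those indefinite-sign terms by a fixed multiple of $w$ for all $s$, so your upper bound is independent of \eqref{w1-limit}, and your lower bound picks the threshold $\mu$ directly in terms of $w|_{r=1}$, which is legitimate because $w$ is increasing by Lemma \ref{h-lower-bd}. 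Both routes reach the same conclusions; yours is shorter, uses far fewer ad hoc constants, and decouples the three estimates, at the (harmless, under \eqref{alpha-beta-relation3}) cost of relying on the monotonicity $v'<0$, which the paper's Riccati argument for \eqref{rw'-w-upper-bd} does not use.
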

\begin{proof}
Note that $v_1(-\infty)=v_{1,s}(-\infty)=0$ and by 
Lemma \ref{h-lower-bd} $v_{1,s}>0$ on $(-\infty,\infty)$. Let
\begin{equation}\label{b0-defn}
b_0=\frac{n-2-(n+2)m}{1-m}\quad\mbox{ and }\quad b_1
=\frac{2m(n-2-nm)}{(1-m)^2}.
\end{equation}
If $b_0\ge 0$, then by \eqref{v1-eqn2},
\begin{equation*}
(v_1^m)_{ss}-b_1v_1^m\le 0\quad\Rightarrow\quad (v_1^m)_s\le b_1v_1^m
\quad\Rightarrow\quad \frac{rw_r(r)}{w(r)}\le\frac{(1-m)b_1}{m}\quad\forall
r\ge 0
\end{equation*}
and \eqref{rw'-w-upper-bd} follows. 

If $b_0<0$, by \eqref{v1-eqn2},
\begin{equation}\label{vm'-ineqn}
(v_1^m)_{ss}+b_0(v_1^m)_s-b_1v_1^m\le 0.
\end{equation}
Let $p=(v_1^m)_s/v_1^m$. Then by \eqref{vm'-ineqn},
\begin{equation}\label{p-eqn}
p_s=\frac{(v_1^m)_{ss}}{v_1^m}-\frac{(v_1^m)_s^2}{v_1^{2m}}\le |b_0|p+b_1-p^2
=-(p-(|b_0|/2))^2+b_1+(b_0^2/4).
\end{equation}
Let 
\begin{equation*}
b_2=\max\left(\frac{3m}{1-m},\sqrt{b_1+b_0^2}+|b_0|\right).
\end{equation*}
We claim that 
\begin{equation}\label{p-claim}
p(s)\le b_2\quad\forall s\in\R.
\end{equation}
Suppose \eqref{p-claim} does not hold. Then there exists $s_0\in\R$ such 
that $p(s_0)>b_2$. Since
\begin{equation}\label{p-eqn2}
p=m\frac{v_{1,s}}{v_1}=\frac{m}{1-m}\frac{w_s}{w}=\frac{m}{1-m}\frac{rw_r}{w}
=\frac{2m}{1-m}\left(1+\frac{1-m}{2}\cdot\frac{rv_r(r)}{v(r)}\right),
\end{equation}
$p(s=-\infty)=2m/(1-m)$. Let $s_1=\inf\{s'<s_0:p(s)>b_2\quad\forall 
s'\le s\le s_0\}$. Then $-\infty<s_1<s_0$, $p(s)>b_2$ for any
$s\in (s_1,s_0)$, and $p(s_1)=b_2$. By \eqref{p-eqn}, $p_s(s)<0$ 
for any $s\in (s_1,s_0)$. Hence $p(s_0)\le p(s_1)=b_2$. Thus contradiction
arises and \eqref{p-claim} follows. Then by \eqref{p-claim} and 
\eqref{p-eqn2}, \eqref{rw'-w-upper-bd} holds with $C_1=b_2/m$.

Let
$$
a_1=\frac{2(n-2-nm)}{1-m},\quad a_2=\frac{\beta}{(n-1)a_1},\quad
\mbox{ and }a_3=a_1^{-1}\max(|b_0|,|1-2m|/(|1-m|w(1))).
$$
Since $w_s>0$ for any $s\in\R$, $w(s)\ge w(1)$ for any $s\ge 1$. Then by 
\eqref{w1-eqn2},
\begin{equation}\label{wss-lower-bd}
w_{ss}\ge a_1((1-a_2w_s)w-a_3(w_s+w_s^2))\quad\forall s\ge 1.
\end{equation}
Suppose $w_s\le C_2':=\min (1,(2a_2)^{-1},w(1)/(8a_3))$ for all $s\ge 1$. 
Then by \eqref{wss-lower-bd}, 
\begin{equation}\label{wss-lower-bd2}
w_{ss}\ge a_1w(1)/4>0\quad\forall s\ge 1.
\end{equation}
Hence 
$w_s\to\infty$ as $s\to\infty$ and contradiction arises. Thus there exists
$s_1>1$ such that $w_s(s_1)>C_2'$. Suppose there exists $s_2>s_1$ such that
$w_s(s_2)<C_2'$. Let $s_3=\inf\{s'<s_2:w_s(s)<C_2'\quad\forall s'\le s\le s_2\}$.
Then $s_1<s_3<s_2$ and $w_s(s_3)=C_2'$. Then by the above argument
\eqref{wss-lower-bd2} holds in $(s_3,s_2)$. Hence $w_s(s_2)>w_s(s_3)=C_2'$ 
and contradiction arises. Thus $w_s(s)\ge C_2'$ for any $s\ge s_1$. Since
$w_s(s)>0$ for all $s\in\R$, the left hand side of \eqref{r-w1'-lower-bd} 
holds with $C_2=\min (C_2',\min_{[0,s_1]}w_s(s))>0$ and \eqref{w1-limit} holds.

Let
$$
a_4=\frac{\beta}{3(n-1)}\quad\mbox{ and }\quad
a_5=\frac{\beta(1-m)}{3(n-1)}.
$$
By \eqref{w1-eqn2} and \eqref{rw'-w-upper-bd},
\begin{equation}\label{wss-upper-ineqn}
w_{ss}\le\left\{\begin{aligned}
&(|b_0|-a_4w)w_s+a_1w(1-(a_2/3)w_s)+(1-m)^{-1}w_s
[(1-2m)C_1-a_5w]\quad\mbox{ if }0<m<1/2\\
&(|b_0|-a_4w)w_s+a_1w(1-(a_2/3)w_s)\qquad\qquad\qquad\qquad\qquad\quad
\mbox{ if }1/2\le m<(n-2)/n.
\end{aligned}\right.
\end{equation}
By \eqref{w1-limit} there exists a constant $s_0>0$ such that
\begin{equation}\label{w-lower-bd}
w>\max((1-2m)C_1/a_5,|b_0|/a_4)\quad\forall s\ge s_0.
\end{equation}
By \eqref{wss-upper-ineqn} and \eqref{w-lower-bd},
\begin{equation}\label{wss-upper-ineqn2}
w_{ss}\le a_1w(1-(a_2/3)w_s)\quad\forall s\ge s_0.
\end{equation}
We claim that there exists a constant $s_1'>s_0$ such that
\begin{equation}\label{ws-lower-bd}
w_s\le C_3':=\max(5/a_2,2w_s(s_0))\quad\forall s\ge s_1'.
\end{equation}
Suppose \eqref{ws-lower-bd} does not hold. Then there exists a constant
$s_2'>s_0$ such that 
$$
w_s(s_2')>C_3'.
$$
Let $s_3'=\inf\{s_0\le t_0<s_2':w_s(s)>C_3'\quad\forall t_0\le s\le s_2'\}$. 
Then $s_0<s_3'<s_2'$, $w_s>C_3'$ for any $s_3'<s<s_2'$, and $w_s(s_3')=C_3'$. 
Then by \eqref{wss-upper-ineqn2} $w_{ss}<0$ in $(s_3',s_2')$. Hence 
$w_s(s_2')\le w_s(s_3')=C_3'$ and contradiction arises. Thus no such
constant $s_2'$ exists and there exists a constant $s_1'>s_0$ such that
\eqref{ws-lower-bd} holds. Then the right hand side of 
\eqref{r-w1'-lower-bd} holds with $C_3=\max (C_3',\max_{[0,s_1']}w_s(s))>0$.
\end{proof}

\begin{thm}\label{decay-rate-thm}
Let $\eta>0$ and let $m$, $\alpha$, $\beta$, satisfy \eqref{m-range} and
\eqref{alpha-beta-relation3}. Let $v$ be the radially 
symmetric solution of \eqref{elliptic-eqn}. Then \eqref{decay-rate2} holds.
\end{thm}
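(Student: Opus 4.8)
The plan is to pass to the logarithmic variable $s=\log r$ and to prove that $w_s(s)$ converges to the constant
$$
L:=\frac{2(n-1)(n-2-nm)}{\beta(1-m)}
$$
as $s\to\infty$. Since $w(r)=r^2v(r)^{1-m}$ and $s=\log r$, once $w_s\to L$ and $w\to\infty$ are known, L'Hopital's rule applied to $w(s)/s$ gives $\lim_{r\to\infty}\frac{r^2v(r)^{1-m}}{\log r}=L$, which is exactly \eqref{decay-rate2}. Because \eqref{alpha-beta-relation3} forces $\rho_1=0$ in \eqref{alpha-beta-1}, the function $w$ satisfies \eqref{w1-eqn2}, which I would rewrite in the form
\begin{equation*}
w_{ss}=w\left(a_1-\frac{\beta}{n-1}w_s\right)+\psi,\qquad
\psi:=\frac{1-2m}{1-m}\cdot\frac{w_s^2}{w}-b_0w_s,
\end{equation*}
where $a_1=\frac{2(n-2-nm)}{1-m}$ and $b_0=\frac{n-2-(n+2)m}{1-m}$, so that $L=\frac{(n-1)a_1}{\beta}$. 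By Lemma \ref{w1s-bd} there are constants $0<C_2\le w_s\le C_3$ for $s\ge 0$ together with $w(s)\to\infty$; consequently $\psi$ is bounded, say $|\psi|\le M$, for all large $s$ (this is the one place where the two-sided bound on $w_s$ and the positivity $w\to\infty$ are essential).

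The heart of the matter is a crossing/barrier estimate for $w_s$. Fix $\varepsilon>0$. At any point $a$ with $w_s(a)=L+\varepsilon$ one has $a_1-\frac{\beta}{n-1}w_s(a)=-\frac{\beta\varepsilon}{n-1}$, whence $w_{ss}(a)\le-\frac{\beta\varepsilon}{n-1}w(a)+M$. Since $w(a)\to\infty$, for all $a$ beyond some $s_\varepsilon$ we get $w_{ss}(a)<0$, which forbids any upward crossing of the level $L+\varepsilon$ for $s\ge s_\varepsilon$. Moreover $w_s$ cannot remain $\ge L+\varepsilon$ on a whole half-line, for then $w_{ss}\le-\frac{\beta\varepsilon}{2(n-1)}w\to-\infty$ would drive $w_s\to-\infty$, contradicting $w_s\ge C_2$. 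Hence $w_s<L+\varepsilon$ for all large $s$, so $\limsup_{s\to\infty}w_s\le L$. The symmetric argument at the level $L-\varepsilon$, where now $a_1-\frac{\beta}{n-1}w_s(a)=\frac{\beta\varepsilon}{n-1}>0$ gives $w_{ss}(a)>0$ at a crossing and $w_s$ cannot stay below $L-\varepsilon$ forever, yields $\liminf_{s\to\infty}w_s\ge L$. Combining the two gives $w_s\to L$.

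Finally, since $w(s)\to\infty$ and $w_s(s)\to L$, L'Hopital's rule gives $\lim_{s\to\infty}w(s)/s=L$, and translating back through $s=\log r$ and $w=r^2v^{1-m}$ establishes \eqref{decay-rate2}. I expect the main obstacle to be not the limit computation but making the crossing argument fully rigorous: one must use the uniform bounds $C_2\le w_s\le C_3$ and the divergence $w\to\infty$ from Lemma \ref{w1s-bd} both to force the sign of $w_{ss}$ at the critical levels $L\pm\varepsilon$ eventually and to rule out $w_s$ stabilizing on the wrong side, and the uniform control of the error term $\psi$ (which is exactly where boundedness of $w_s$ away from $0$ and $\infty$ is needed) is the only delicate bookkeeping point.
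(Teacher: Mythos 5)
Your proof is correct, but it reaches the limit by a genuinely different mechanism than the paper. Both arguments rest on the same input, Lemma \ref{w1s-bd}: the two-sided bound $C_2\le w_s\le C_3$ and the divergence $w\to\infty$. The paper, however, works with $q(r)=rw_r(r)$ via integral representations built from integrating factors (equations \eqref{q-eqn}--\eqref{q-eqn3} and \eqref{q1-integral-eqn}), extracts a convergent subsequence $q(r_i)\to q_\infty$ from an arbitrary sequence $r_i\to\infty$, and rules out $q_\infty\ne a_0$ by l'Hospital-type computations that require splitting into three cases according to the sign of $b_0=\frac{n-2-(n+2)m}{1-m}$ and whether $m=1/2$. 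Your barrier/crossing argument on $w_{ss}=w\left(a_1-\frac{\beta}{n-1}w_s\right)+\psi$ instead exploits directly that the unbounded coefficient $w\to\infty$ swamps the bounded error $\psi$ (boundedness of $\psi$ being exactly where $C_2\le w_s\le C_3$ enters), forcing $w_{ss}$ to have a strict sign at the levels $L\pm\varepsilon$ for large $s$; this avoids the integral representations, the subsequence extraction, and the case analysis entirely, and yields the full limit $w_s\to L$ in one stroke, uniformly in $m$ (no distinction between $m$ below, equal to, or above $1/2$ or $(n-2)/(n+2)$). Your technique is in fact the same kind of crossing argument the paper itself deploys inside the proof of Lemma \ref{w1s-bd} (the arguments involving $s_1,s_2,s_3$ and $s_1',s_2',s_3'$), so it is natural within the paper's toolkit, just applied one level higher. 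One further small point in your favor: the paper's proof stops at $q(r)\to a_0$ and leaves the passage from this to \eqref{decay-rate2} implicit, whereas you spell out the final step $w(s)/s\to L$ via l'Hospital (equivalently, integrating $w_s\to L$), which is needed in either approach.
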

\begin{proof}
Let $q(r)=rw_r(r)$, 
$$
a_0=\frac{2(n-2-nm)(n-1)}{(1-m)\beta},\quad q_1=q-a_0,
$$
and let $b_0$ be given by \eqref{b0-defn}.
By \eqref{w1-r-eqn2},
\begin{align}
&(r^{b_0}q(r)w(r)^{\frac{2m-1}{1-m}})'=\frac{\beta}{n-1}
\cdot\frac{w^{\frac{m}{1-m}}}{r^{1-b_0}}(a_0-q(r))\quad\forall r>0\label{q-eqn}\\
\Rightarrow\quad&q_r+\frac{b_0}{r}q+\frac{\beta}{n-1}\frac{w}{r}
(q-a_0)=\frac{1-2m}{1-m}\cdot\frac{q^2}{rw}\quad\forall r>0\notag\\
\Rightarrow\quad&q_{1,r}+\frac{b_0}{r}q_1+\frac{\beta}{n-1}\frac{w}{r}q_1
=\frac{1-2m}{1-m}\cdot\frac{q^2}{rw}-\frac{b_0a_0}{r}\quad\forall r>0.
\label{q1-eqn}
\end{align}
Since
\begin{align*}
&r^{b_0}q(r)w(r)^{\frac{2m-1}{1-m}}=r^{b_0}\cdot(r^2v(r)^{1-m})^{\frac{2m-1}{1-m}}
\cdot 2r^2v(r)^{-m}h(r)=2r^{\frac{n-2-nm}{1-m}}v(r)^m\left(1+\frac{1-m}{2}
\cdot\frac{rv_r(r)}{v(r)}\right)\\
\Rightarrow\quad&\lim_{r\to 0}r^{b_0}q(r)w(r)^{\frac{2m-1}{1-m}}=0,
\end{align*}
integrating \eqref{q-eqn} over $(0,r)$,
\begin{align}
&r^{b_0}q(r)w(r)^{\frac{2m-1}{1-m}}=\frac{\beta}{n-1}
\int_0^r\rho^{b_0-1}w(\rho)^{\frac{m}{1-m}}(a_0-q(\rho))\,d\rho\label{q-eqn2}\\
\Rightarrow\quad&q(r)=\frac{\beta}{n-1}\cdot\frac{\int_0^r\rho^{b_0-1}
w(\rho)^{\frac{m}{1-m}}(a_0-q(\rho))\,d\rho}{r^{b_0}w(r)^{\frac{2m-1}{1-m}}}
\label{q-eqn3}
\end{align}
Let $\{r_i\}_{i=1}^{\infty}$ be a sequence of positive numbers such 
that $r_i\to\infty$ as $i\to\infty$. By Lemma \ref{w1s-bd} there exist
constants $C_1>0$, $C_2>0$, $C_3>0$, such that \eqref{rw'-w-upper-bd} and 
\eqref{r-w1'-lower-bd} holds. Then by \eqref{r-w1'-lower-bd} the sequence 
$\{r_i\}_{i=1}^{\infty}$ has a subsequence which we may assume without loss 
of generality to be the sequence itself such that 
$q(r_i)\to q_{\infty}$ as $i\to\infty$ for some constant $q_{\infty}$
satisfying 
\begin{equation}\label{q-limit-lower-bd}
C_2\le q_{\infty}\le C_3.
\end{equation}
Suppose $q_{\infty}\ne a_0$. Let 
$$
f_1(r)=\mbox{exp}\,\left(\frac{\beta}{n-1}\int_0^r\rho^{-1}w(\rho)
\,d\rho\right).
$$
We now divide the proof into three cases.

\noindent $\underline{\text{\bf Case 1}}$: $(n-2)/(n+2)<m=1/2<(n-2)/n$.

\noindent 
Then $b_0<0$. Since $m=1/2$, by \eqref{q1-eqn},
\begin{equation}\label{q-bd}
(r^{b_0}f_1(r)q_1(r))'\le 0\quad\Rightarrow\quad q_1(r)\le 0\quad\Rightarrow
\quad
0\le q(r)\le a_0\quad\forall r>0.
\end{equation}
Hence $q_{\infty}<a_0$. Then by \eqref{q-eqn3} and \eqref{q-bd},
\begin{equation*}
q(r_i)=\frac{\beta}{n-1}r_i^{|b_0|}\int_0^{r_i}\rho^{b_0-1}
w(\rho)^{\frac{m}{1-m}}(a_0-q(\rho))\,d\rho\to\infty\quad\mbox{ as }i\to\infty
\end{equation*}
and contradiction arises. Hence $q_{\infty}=a_0$.

\noindent $\underline{\text{\bf Case 2}}$: $(n-2)/(n+2)<m<(n-2)/n$ and 
$m\ne 1/2$.

\noindent
Since $b_0<0$, by Lemma \ref{w1s-bd}, 
\eqref{q-eqn2}, \eqref{q-eqn3}, \eqref{q-limit-lower-bd}, and 
the l'Hosiptal rule,
\begin{align*}
q_{\infty}=&|\lim_{i\to\infty}q(r_i)|
=\frac{\beta}{n-1}\left|\lim_{i\to\infty}\frac{r_i^{|b_0|}\int_0^{r_i}\rho^{b_0-1}
w(\rho)^{\frac{m}{1-m}}(a_0-q(\rho))\,d\rho}{w(r_i)^{\frac{2m-1}{1-m}}}\right|\\
=&\frac{\beta}{n-1}\left|\lim_{i\to\infty}
\frac{|b_0|r_i^{|b_0|-1}\int_0^{r_i}\rho^{b_0-1}w(\rho)^{\frac{m}{1-m}}
(a_0-q(\rho))\,d\rho+
r_i^{-1}w(r_i)^{\frac{m}{1-m}}(a_0-q(r_i))}{\frac{2m-1}{1-m}
w(r_i)^{\frac{3m-2}{1-m}}w_r(r_i)}\right|\\
=&\frac{\beta(1-m)}{(n-1)|2m-1|}q_{\infty}^{-1}
\left|\lim_{i\to\infty}[|b_0|(n-1)\beta^{-1}q_{\infty}
w(r_i)+w(r_i)^2(a_0-q_{\infty})]\right|\\
=&\infty.
\end{align*}
Hence contraction arises. Thus $q_{\infty}=a_0$. 

\noindent $\underline{\text{\bf Case 3}}$: $0<m\le (n-2)/(n+2)$.

\noindent
Then $b_0\ge 0$. By \eqref{q1-eqn},
\begin{align}\label{q1-integral-eqn}
&r^{b_0}f_1(r)q_1(r)=f_1(1)q_1(1)-a_0b_0\int_1^r\rho^{b_0-1}f_1(\rho)\,d\rho
+\frac{1-2m}{1-m}\int_1^r\frac{\rho^{b_0-1}q(\rho)^2f_1(\rho)}{w(\rho)}
\,d\rho\quad\forall r\ge 1\notag\\
\Rightarrow\quad&q_1(r)=\frac{f_1(1)q_1(1)
-a_0b_0\int_1^r\rho^{b_0-1}f_1(\rho)\,d\rho+\frac{1-2m}{1-m}
\int_1^r\frac{\rho^{b_0-1}q(\rho)^2f_1(\rho)}{w_1(\rho)}\,d\rho}{r^{b_0}f_1(r)}
\quad\forall r\ge 1.
\end{align}
By \eqref{rw'-w-upper-bd}, \eqref{w1-limit}, and the l'Hosiptal rule,
\begin{equation*}
\liminf_{r\to\infty}\frac{f_1(r)}{w(r)}=\frac{\beta}{n-1}\liminf_{r\to\infty}
\frac{r^{-1}w(r)f_1(r)}{w'(r)}
\ge\frac{\beta}{(n-1)C_1}\liminf_{r\to\infty}f_1(r)=\infty.
\end{equation*}
Thus there exists a constant $R_1>1$ such that
\begin{equation}\label{f-w-lower-bd}
\frac{f_1(r)}{w(r)}\ge 1\quad\forall r\ge R_1.
\end{equation}
By \eqref{r-w1'-lower-bd} and \eqref{f-w-lower-bd},
\begin{equation}\label{f2-integral-infty}
\int_1^r\frac{\rho^{b_0-1}q(\rho)^2f_1(\rho)}{w(\rho)}\,d\rho
\ge C\int_{R_1}^r\rho^{b_0-1}\,d\rho\to\infty\quad\mbox{ as }r\to\infty.
\end{equation}
On the other hand
\begin{equation}\label{limit1}
\lim_{r\to\infty}\frac{\int_1^r\rho^{b_0-1}f_1(\rho)\,d\rho}{r^{b_0}f_1(r)}
=\lim_{r\to\infty}
\frac{r^{b_0-1}f_1(r)}{b_0r^{b_0-1}f_1(r)+\beta (n-1)^{-1}r^{b_0-1}w(r)f_1(r)}
=\lim_{r\to\infty}\frac{1}{b_0+\beta (n-1)^{-1}w(r)}=0.
\end{equation}
By \eqref{q-limit-lower-bd}, \eqref{q1-integral-eqn}, 
\eqref{f2-integral-infty}, \eqref{limit1} and the l'Hosiptal rule,
\begin{align*}
\lim_{i\to\infty}q_1(r_i)=&\lim_{i\to\infty}\frac{f_1(1)q_1(1)
-a_0b_0\int_1^r\rho^{b_0-1}f_1(\rho)\,d\rho+\frac{1-2m}{1-m}
\int_1^{r_i}\frac{\rho^{b_0-1}q(\rho)^2f_1(\rho)}{w(\rho)}\,d\rho}
{r_i^{b_0}f_1(r_i)}\\
=&\frac{1-2m}{1-m}\lim_{i\to\infty}\frac{r_i^{b_0-1}q(r_i)^2w(r_i)^{-1}f_1(r_i)}
{b_0r_i^{b_0-1}f_1(r_i)+\beta(n-1)^{-1}r_i^{b_0-1}w(r_i)f_1(r_i)}\\
=&\frac{1-2m}{1-m}\lim_{i\to\infty}\frac{q(r_i)^2w(r_i)^{-2}}
{b_0w(r_i)^{-1}+\beta(n-1)^{-1}}\\
=&0
\end{align*}
Hence $q_{\infty}=a_0$. 

By case 1, case 2, and case 3, $q(r_i)\to a_0$ as $i\to\infty$. Since 
the sequence $\{r_i\}_{i=1}^{\infty}$ is arbitrary, $q(r)\to a_0$ as $r\to\infty$.
\end{proof}

\begin{cor}
The metric $g_{ij}=v^{\frac{4}{n+2}}dx^2$, $n\ge 3$, of a locally conformally 
flat gradient steady Yamabe soliton where $v$ satisfies \eqref{elliptic-eqn}
has the exact decay rate \eqref{decay-rate}. 
\end{cor}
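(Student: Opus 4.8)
The plan is to read this off as an immediate specialization of Theorem \ref{decay-rate-thm}. First I would verify that the Yamabe exponent $m=(n-2)/(n+2)$ satisfies the hypothesis \eqref{m-range} of that theorem: since $n+2>n$ for every $n\ge 3$, we have $(n-2)/(n+2)<(n-2)/n$, so indeed $0<m<(n-2)/n$. Next I would recall from the discussion around \eqref{alpha-beta-1} that for a \emph{steady} Yamabe soliton one has $\rho_1=0$, and hence $\alpha=2\beta/(1-m)$ with $\beta>0$; this is precisely condition \eqref{alpha-beta-relation3}. Thus all the hypotheses of Theorem \ref{decay-rate-thm} are in force for the radially symmetric solution $v$ of \eqref{elliptic-eqn} attached to the soliton.

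With the hypotheses checked, I would simply invoke Theorem \ref{decay-rate-thm} to obtain the limit \eqref{decay-rate2}, and then substitute $m=(n-2)/(n+2)$ into the constant on its right-hand side. A short computation gives $1-m=4/(n+2)$ and $n-2-nm=(n-2)\bigl(1-n/(n+2)\bigr)=2(n-2)/(n+2)$, so that
\begin{equation*}
\frac{2(n-1)(n-2-nm)}{\beta(1-m)}
=\frac{2(n-1)\cdot 2(n-2)/(n+2)}{\beta\cdot 4/(n+2)}
=\frac{(n-1)(n-2)}{\beta},
\end{equation*}
which is exactly the right-hand side of \eqref{decay-rate}. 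This yields the claimed exact decay rate for the metric $g_{ij}=v^{\frac{4}{n+2}}dx^2$.

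Since the statement is a genuine corollary, I do not expect any real obstacle: all of the analytic content (the monotonicity estimates of Lemmas \ref{h-lower-bd} and \ref{w1s-bd}, together with the limiting arguments in the three cases of Theorem \ref{decay-rate-thm}) has already been carried out, and what remains is only the verification that the Yamabe parameters meet the hypotheses and the routine algebraic simplification of the constant. The one point worth stating explicitly is the identification $\rho_1=0 \Leftrightarrow$ steady soliton, so that \eqref{alpha-beta-relation3} is indeed the correct specialization of \eqref{alpha-beta-1}.
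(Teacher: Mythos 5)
Your proposal is correct and follows exactly the route the paper intends: the corollary is stated without proof as an immediate specialization of Theorem \ref{decay-rate-thm}, using the fact (from the discussion of \eqref{alpha-beta-1}) that a steady soliton has $\rho_1=0$, hence $\alpha=2\beta/(1-m)>0$, and that $m=(n-2)/(n+2)$ satisfies \eqref{m-range}. Your algebraic simplification of the constant, using $1-m=4/(n+2)$ and $n-2-nm=2(n-2)/(n+2)$, correctly reduces \eqref{decay-rate2} to \eqref{decay-rate}.
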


By Theorem \ref{m-to-0-thm}, Theorem \ref{decay-rate-thm}, and the result
of \cite{Hs} we have the following result.

\begin{cor}
Let $\beta>0$, $\eta>0$, and $n\ge 3$. For any $0<m<(n-2)/n$, let 
$\alpha_m=2\beta/(1-m)$ and let $v^{(m)}$ be the radially solution of 
\eqref{elliptic-eqn} with $\alpha=\alpha_m$. Then
$$
\lim_{|x|\to\infty}\lim_{m\to 0}\frac{|x|^2v^{(m)}(x)^{1-m}}{\log |x|}
=\lim_{m\to 0}\lim_{|x|\to\infty}\frac{|x|^2v^{(m)}(x)^{1-m}}{\log |x|}
=\frac{2(n-1)(n-2)}{\beta}.
$$ 
\end{cor}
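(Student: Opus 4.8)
The plan is to evaluate the two iterated limits separately and check that both equal $\frac{2(n-1)(n-2)}{\beta}$. Consider first the limit in which $|x|\to\infty$ is taken before $m\to0$. For each fixed $m\in(0,(n-2)/n)$ the choice $\alpha=\alpha_m=2\beta/(1-m)$ is exactly hypothesis \eqref{alpha-beta-relation3}, so Theorem \ref{decay-rate-thm} applies to $v^{(m)}$ and gives
$$
\lim_{|x|\to\infty}\frac{|x|^2v^{(m)}(x)^{1-m}}{\log|x|}=\frac{2(n-1)(n-2-nm)}{\beta(1-m)}.
$$
Letting $m\to0$ in the right-hand side is elementary and produces $\frac{2(n-1)(n-2)}{\beta}$, which settles this iterated limit.

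For the reversed order, in which $m\to0$ is taken first, the key input is Theorem \ref{m-to-0-thm}: $v^{(m)}\to u$ uniformly on compact subsets of $\R^n$, where $u$ solves the logarithmic-diffusion problem \eqref{log-eqn}. The one point requiring care is that here the coefficient $\alpha_m=2\beta/(1-m)$ depends on $m$ rather than being held fixed. Since $\alpha_m\to2\beta$ and $\alpha_m$ remains in the bounded interval $(2\beta,n\beta)$ for $m\in(0,(n-2)/n)$ --- and since \eqref{alpha-beta-relation} holds for every such $m$, being equivalent to $nm\le n-2$ --- the uniform bound $0<v^{(m)}\le\eta$ and the local Lipschitz estimate used in Case~1 of the proof of Theorem \ref{m-to-0-thm} have constants independent of $m$. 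Thus the same Ascoli-theorem compactness argument goes through, and (by the uniqueness in Theorem~1.3 of \cite{Hs}) the limit $u$ solves \eqref{log-eqn2}, equivalently \eqref{log-eqn}, with the limiting coefficient $\alpha=2\beta$. Fixing $x\ne0$ and using $v^{(m)}(x)\to u(x)>0$ together with $1-m\to1$ gives $v^{(m)}(x)^{1-m}\to u(x)$, hence
$$
\lim_{m\to0}\frac{|x|^2v^{(m)}(x)^{1-m}}{\log|x|}=\frac{|x|^2u(x)}{\log|x|}.
$$

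It then remains to take $|x|\to\infty$ in the last display, for which I would invoke the exact decay-rate result of \cite{Hs} for the solution $u$ of \eqref{log-eqn} with $\alpha=2\beta$, namely
$$
\lim_{|x|\to\infty}\frac{|x|^2u(x)}{\log|x|}=\frac{2(n-1)(n-2)}{\beta}.
$$
This matches the value obtained above, so the two iterated limits coincide and equal $\frac{2(n-1)(n-2)}{\beta}$, as asserted. The main obstacle lies entirely in this reversed-order limit: one must confirm that the singular-limit convergence of Theorem \ref{m-to-0-thm} is robust under the $m$-dependence of $\alpha_m$ (uniform a priori estimates, and identification of the limit equation with $\alpha=2\beta$), and then correctly import the decay rate of the limit profile $u$ from \cite{Hs}. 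The $|x|\to\infty$-first limit, by contrast, is immediate once Theorem \ref{decay-rate-thm} is in hand.
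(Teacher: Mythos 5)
Your proposal is correct and follows essentially the same route as the paper, which deduces the corollary directly from Theorem \ref{decay-rate-thm} (for the $|x|\to\infty$-first order, then letting $m\to 0$ in $\frac{2(n-1)(n-2-nm)}{\beta(1-m)}$) together with Theorem \ref{m-to-0-thm} and the decay-rate result of \cite{Hs} (for the $m\to 0$-first order). Your explicit verification that the argument of Theorem \ref{m-to-0-thm} survives the $m$-dependence of $\alpha_m=2\beta/(1-m)$ --- uniform bounds since $\alpha_m\in(2\beta,n\beta)$ and identification of the limit equation with $\alpha=2\beta$ --- is a point the paper leaves implicit, and is a worthwhile addition rather than a deviation.
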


\section{Decay rate for $\frac{2\beta}{1-m}>\max(\alpha,0)$}
\setcounter{equation}{0}
\setcounter{thm}{0}

In this section we will use a modification of the technique of \cite{Hs} 
to prove the decay rate \eqref{decay-rate3} of the radially symmetric solution 
of \eqref{elliptic-eqn} when \eqref{m-range} and \eqref{alpha-beta-relation5} 
hold. 

\begin{thm}
Let $\eta>0$ and let $m$, $n$, $\alpha$, $\beta$, satisfies \eqref{m-range} 
and \eqref{alpha-beta-relation5}. Let $v$ be the solution of 
\eqref{elliptic-eqn}.
Then \eqref{decay-rate3} holds for some constant $A>0$.
\end{thm}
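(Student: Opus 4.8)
The plan is to reduce everything to the boundedness of the monotone quantity $w_1(r)=r^2v(r)^{2k}$ from \eqref{w-defn}, where $k=\beta/\alpha$. Since \eqref{alpha-beta-relation5} together with $m\le (n-2)/n$ forces $\alpha<2\beta/(1-m)\le\beta(n-2)/m$, the hypotheses \eqref{alpha-beta-relation} and \eqref{alpha-beta-relation4} of Theorem \ref{existence-thm} and Lemma \ref{h1-lower-bd} hold, so $v$ exists on $(0,\infty)$, $w_1'(r)>0$ for all $r>0$, and $v+krv'>0$. Because $\alpha/\beta=1/k$, one has the key identity $|x|^{\alpha/\beta}v(x)=w_1(r)^{1/(2k)}$, so \eqref{decay-rate3} will follow once I show $w_1$ is bounded above: being increasing and positive it then converges to some $L\in(0,\infty)$, and $A=L^{1/(2k)}\in(0,\infty)$. (The case $\alpha=0$ is trivial, $v\equiv\eta$, so I assume $\alpha\ne 0$.)

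The engine is a clean ODE for $L:=\log w_1$ in the variable $s=\log r$. Writing $\ell=\log v$ and using $k=\beta/\alpha$, the radial equation \eqref{ode} collapses, after dividing by $v$ and using $L=2s+2k\ell$, to
\begin{equation*}
P_s=-\frac{m}{2k}(P-2)^2-(n-2)(P-2)-\frac{\beta}{n-1}\,w\,P,\qquad P:=L_s=\frac{r(w_1)_r}{w_1},
\end{equation*}
where $w=r^2v^{1-m}$ is exactly the quantity of Section 2; the point is that the coefficient $\alpha+\beta\ell_s$ of the nonlinear term simplifies to $\tfrac{\beta}{2k}L_s$. Two structural facts make this tractable. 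First, $P=2\bigl(1+k\,rv'/v\bigr)$, so \eqref{basic-monotone-ineqn} gives $P>0$, while the sign of $v'$ in \eqref{v'-bd0} (opposite to that of $k$) gives $krv'<0$, hence $P<2$; thus $0<P<2$ is bounded, and the first two terms on the right are dominated by a constant $C_0=\tfrac{2m}{|k|}+2(n-2)$. Second, the term $-\tfrac{\beta}{n-1}wP$ is strongly dissipative once $w\to\infty$.

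It remains to show $w=r^2v^{1-m}\to\infty$ fast enough that $\int^{\infty}w^{-1}\,ds<\infty$. When $\alpha<0$, $v$ is increasing by \eqref{v'-bd0}, so $v\ge\eta$ and $w\ge\eta^{1-m}r^2$, which suffices. When $\alpha>0$, I first note $w$ is increasing by Lemma \ref{h-lower-bd} (as $2\beta/(1-m)>\alpha>0$), and that $w\to\infty$: otherwise $v\le Cr^{-2/(1-m)}$ would force the increasing positive quantity $r^{\alpha/\beta}v=w_1^{1/(2k)}$ to tend to $0$ (since $\alpha/\beta<2/(1-m)$), a contradiction. To upgrade this to power growth I use the Section 2 equation \eqref{w1-eqn} with $\rho_1:=\alpha(1-m)-2\beta<0$: the resulting term $+\tfrac{|\rho_1|}{n-1}w^2$ is the only one that can balance the damping, and a barrier argument in the spirit of Lemma \ref{w1s-bd} yields $(\log w)_s\ge\delta_0>0$ eventually, i.e. $w\ge c\,e^{\delta_0 s}$. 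With $w$ growing at least exponentially in $s$ in both cases, an integrating-factor estimate on $P_s\le C_0-\tfrac{\beta}{n-1}wP$ gives $P(s)\le C\,w(s)^{-1}$ for large $s$, whence $\int_0^{\infty}P\,ds<\infty$; thus $L=\log w_1$ is bounded above and $w_1$ converges to a finite positive limit, proving \eqref{decay-rate3}.

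I expect the main obstacle to be the case $\alpha>0$: establishing that $w=r^2v^{1-m}$ grows at a genuine power rate rather than merely logarithmically, as it does in the borderline case $\alpha=2\beta/(1-m)$ treated in Lemma \ref{w1s-bd}. This is precisely where the strict inequality in \eqref{alpha-beta-relation5}, i.e. $\rho_1<0$, becomes essential, and producing a clean self-contained barrier argument for $(\log w)_s$ being bounded below—rather than leaning on the more delicate asymptotics behind Theorem \ref{decay-rate-thm}—will require the most care.
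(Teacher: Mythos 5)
Your proposal is correct, and although it is secretly driven by the same equation as the paper's proof, the way you extract the decay is genuinely different. Since $w_1=q^{2k}$ where $q(r)=r^{\alpha/\beta}v(r)$, your quantity $P=(\log w_1)_s$ equals $2k\,rq'/q$, and your first-order ODE for $P$ is exactly the paper's Riccati equation \eqref{q-eqn3.1} rewritten in the variable $s=\log r$ (up to the factor $2k$). The paper multiplies \eqref{q-eqn3.1} by the integrating factor $r^{n-1-(2m\alpha/\beta)}q(r)^mf_2(r)$, integrates, and applies l'Hospital's rule to the quotient \eqref{q'-integral-eqn3.1} to get $r^p\,q'/q\to 0$ for $1<p<3-\frac{\alpha}{\beta}(1-m)$; the growth of $f_2$ needed there comes directly from monotonicity of $q$, so no lower bound on $w=r^2v^{1-m}$ is ever required, and all signs of $\alpha$ are handled in one computation. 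You instead combine the a priori bound $0<P<2$ (Lemma \ref{h1-lower-bd} together with \eqref{v'-bd0}) with the damping term $-\frac{\beta}{n-1}wP$, which forces you to prove that $w$ grows like a positive power of $r$: free for $\alpha\le 0$ (where $v\ge\eta$), but requiring an extra barrier argument on \eqref{w1-eqn} when $\alpha>0$. What your route buys: the role of the strict inequality \eqref{alpha-beta-relation5} becomes completely transparent ($\rho_1=\alpha(1-m)-2\beta<0$ is precisely what produces the power growth of $w$), the l'Hospital computations are replaced by elementary comparison arguments, and you get a quantitative rate $P\lesssim w^{-1}$. What it costs: a case split on the sign of $\alpha$ and dependence on the Section 2 machinery (Lemma \ref{h-lower-bd} and equation \eqref{w1-eqn}).

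For the step you flag as delicate, your sketch does close. With $z=(\log w)_s$, which is positive by Lemma \ref{h-lower-bd} (applicable since $0<\alpha<2\beta/(1-m)$), equation \eqref{w1-eqn} gives
\begin{equation*}
z_s=-\frac{m}{1-m}z^2-\frac{n-2-(n+2)m}{1-m}\,z+\frac{2(n-2-nm)}{1-m}
+\frac{w}{n-1}\bigl(|\rho_1|-\beta z\bigr),
\end{equation*}
so on the region $\{z\le|\rho_1|/(2\beta)\}$ one has $z_s\ge -C+\frac{|\rho_1|}{2(n-1)}w$ with $C$ depending only on $n$, $m$, $|\rho_1|/\beta$. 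Since $w$ is increasing and $w\to\infty$ (your observation that $w_1^{1/(2k)}$ is increasing and positive rules out boundedness of $w$), there is $s_0$ with $z_s\ge 1$ on this region for all $s\ge s_0$, and the first-crossing argument you cite traps $z\ge\min(z(s_0),|\rho_1|/(2\beta))>0$ thereafter, giving $w\ge c\,e^{\delta_0 s}$. The final step is also sound: from $P_s\le C_0-\frac{\beta}{n-1}wP$ one gets
$P(s)\le P(s_0)e^{-\frac{\beta}{n-1}\int_{s_0}^s w}+C_0\int_{s_0}^s e^{-\frac{\beta}{n-1}\int_\tau^s w}\,d\tau=O(e^{-\delta_0 s})$,
so $\int^\infty P\,ds<\infty$, $\log w_1$ is bounded above, and $w_1$ converges to a positive limit, which is \eqref{decay-rate3}.
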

\begin{proof}
Let $q(r)=r^{\alpha/\beta}v(r)$. Then by Lemma \ref{h1-lower-bd},
\begin{equation}\label{q'-lower-bd3.1}
q'(r)=(\alpha/\beta)r^{\frac{\alpha}{\beta}-1}(v(r)+krv'(r))>0\quad\forall r>0.
\end{equation}
By direct computation,
\begin{equation}\label{q-eqn3.1}
\left(\frac{q'}{q}\right)'+\frac{n-1-(2m\alpha/\beta)}{r}\cdot\frac{q'}{q}
+m\left(\frac{q'}{q}\right)^2
+\frac{\beta r^{1-\frac{\alpha}{\beta}(1-m)}q'}{(n-1)q^m}
=\frac{\alpha}{\beta}\cdot\frac{n-2-(m/k)}{r^2}.
\end{equation}
Let
$$
f_2(r)=\mbox{exp}\,\left(\frac{\beta}{n-1}\int_1^r
\rho^{1-\frac{\alpha}{\beta}(1-m)}q(\rho)^{1-m}\,d\rho\right).
$$
Then $f_2'(r)=(n-1)^{-1}\beta r^{1-\frac{\alpha}{\beta}(1-m)}q(r)^{1-m}f_2(r)$ and
\begin{equation}\label{f2-limit}
f_2(r)\ge \mbox{exp}\,\left(\frac{\beta q(1)^{1-m}}{n-1}
\int_1^r\rho^{1-\frac{\alpha}{\beta}(1-m)}\,d\rho\right)
=\mbox{exp}\,\left(c_0(r^{2-\frac{\alpha}{\beta}(1-m)}-1)\right)\to\infty
\quad\mbox{ as }r\to\infty.
\end{equation}
where 
$$
c_0=\frac{\beta q(1)^{1-m}}{(n-1)(2-\frac{\alpha}{\beta}(1-m))}.
$$
Let $c_1=q(1)^{m-1}q'(1)f_2(1)$ and $c_2=(\alpha/\beta)(n-2-(m/k))$. Multiplying 
\eqref{q-eqn3.1} by $r^{n-1-(2m\alpha/\beta)}q(r)^mf_2(r)$ and integrating over $(1,r)$,
\begin{equation}\label{q'-integral-eqn3.1}
r^{n-1-(2m\alpha/\beta)}q(r)^mf_2(r)\cdot\frac{q'(r)}{q(r)}
=c_1+c_2\int_1^r\rho^{n-3-(2m\alpha/\beta)}q(\rho)^mf_2(\rho)\,d\rho
\quad\forall r>1.
\end{equation}
By \eqref{f2-limit}, \eqref{q'-integral-eqn3.1}, and the l'Hosiptal rule,
\begin{align}\label{q'/q-limit}
\limsup_{r\to\infty}r^p\frac{q'(r)}{q(r)}
\le&\limsup_{r\to\infty}\frac{c_1+c_2\int_1^r\rho^{n-3-(m\alpha/\beta)}
q(\rho)^mf_2(\rho)\,d\rho}{r^{n-p-1-(2m\alpha/\beta)}q(r)^mf_2(r)}\notag\\
\le&c_2\limsup_{r\to\infty}\frac{r^{n-3-(2m\alpha/\beta)}q(r)^mf_2(r)}{F(r)}
\quad\forall p>0
\end{align}
where 
\begin{align}\label{F-lower-bd}
F(r)=&(n-p-1-(2m\alpha/\beta))r^{n-p-2-(2m\alpha/\beta)}q(r)^mf_2(r)
+mr^{n-p-1-(2m\alpha/\beta)}q(r)^{m-1}q'(r)f_2(r)\notag\\
&\qquad +r^{n-p-1-(2m\alpha/\beta)}q(r)^mf_2'(r)\notag\\
\ge&(n-p-1-(2m\alpha/\beta))r^{n-p-2-(2m\alpha/\beta)}q(r)^mf_2(r)
+r^{n-p-1-(2m\alpha/\beta)}q(r)^mf_2'(r)\notag\\
=&(n-p-1-(2m\alpha/\beta))r^{n-p-2-(2m\alpha/\beta)}q(r)^mf_2(r)
+(n-1)^{-1}\beta r^{n-p-(\alpha/\beta)(1+m)}q(r)f_2(r)
\end{align}
By \eqref{q'-lower-bd3.1}, \eqref{q'/q-limit} and \eqref{F-lower-bd},
\begin{align*}
0\le&\limsup_{r\to\infty}r^p\frac{q'(r)}{q(r)}\\
\le&c_2\limsup_{r\to\infty}\frac{r^{n-3-(2m\alpha/\beta)}q(r)^mf_2(r)}
{(n-p-1-\frac{2m\alpha}{\beta})r^{n-p-2-(2m\alpha/\beta)}q(r)^mf_2(r)
+\beta(n-1)^{-1}r^{n-p-(\alpha/\beta)(1+m)}q(r)f_2(r)}\\
\le&c_2\limsup_{r\to\infty}\frac{1}{(n-p-1-\frac{2m\alpha}{\beta})r^{1-p}
+\beta(n-1)^{-1}r^{3-p-(\alpha/\beta)(1-m)}q(r)^{1-m}}\\
=&0\qquad\qquad\forall 1<p<3-(\alpha/\beta)(1-m).
\end{align*}
Hence
\begin{equation}\label{r-q'-q-limit}
\lim_{r\to\infty}r^p\frac{q'(r)}{q(r)}=0\quad\forall 1<p<3-(\alpha/\beta)(1-m).
\end{equation}
Let $p_0=2-(\alpha/2\beta)(1-m)$. By \eqref{alpha-beta-relation5}, 
$1<p_0<3-(\alpha/\beta)(1-m)$. By \eqref{r-q'-q-limit},
\begin{equation}
|\log q(r)-\log q(1)|\le\int_1^r|(\log q)'(\rho)|\,d\rho\le C\int_1^r\rho^{-p_0}
\,d\rho\le C_3\quad\forall r\ge 1.
\end{equation}
for some constant $C_3>0$. Hence
\begin{equation}\label{q-upper-lower-bd3.1}
e^{-C_3}q(1)\le q(r)\le e^{C_3}q(1)\quad\forall r\ge 1.
\end{equation}
By \eqref{q'-lower-bd3.1} and \eqref{q-upper-lower-bd3.1}, $q(r)$ increases 
to some constant $A\in\R$ as $r\to\infty$ and the theorem follows.
\end{proof}


\begin{thebibliography}{99}

\bibitem[A]{A} D.G.~Aronson, {\em The porous medium equation}, CIME
Lectures, in Some problems in Nonlinear Diffusion, Lecture
Notes in Mathematics 1224, Springer-Verlag, New York, 1986. 

\bibitem[DK]{DK} P.~Daskalopoulos and C.E.~Kenig, {\em Degenerate 
diffusion-initial value problems and local regularity theory}, 
Tracts in Mathematics 1, European Mathematical Society, 2007.

\bibitem[DS]{DS} P.~Daskalopoulos and N.~Sesum, {\em The classification of
locally conformally flat Yamabe solitons}, http://arxiv.org/abs/1104.2242.

\bibitem[GaP]{GaP} V.A.~Galaktionov and S.A.~Posashkov, {\em On the nonlinear
fast diffusion in $\R^n$}, Soviet Math. Dokl. 33 (1986), no. 2, 412--415.

\bibitem[GP]{GP} B.H.~Gilding and L.A.~Peletier, {\em On a class of similarity
solutions of the porous media equation}, J. Math. Analysis and Appl. 55
(1976), 351--364.

\bibitem[Hs]{Hs} S.Y.~Hsu, {\em Classification of radially symmetric 
self-similar solutions of $u_t=\log u$ in higher dimensions}, Differential and
Integral Equations 18 (2005), no. 10, 1175--1192.

\bibitem[K]{K} J.R.~King, {\em Self-similar behaviour for the equation of fast 
nonlinear diffusion}, Phil. Trans. Royal Soc. London, Series A 343
(1993), 337--375.

\bibitem[P]{P} L.A.~Peletier, {\em The porous medium equation in 
Applications of Nonlinear Analysis in the Physical Sciences}, 
H.Amann, N.Bazley, K.Kirchgassner editors, Pitman, Boston, 1981.

\bibitem[PZ]{PZ} M.A.~Peletier and H.~Zhang, {\em Self-similar solutions of a 
fast diffusion equation that do not conserve mass}, Diff. Integral Equations
8 (1995), no. 8, 2045--2064.

\bibitem[V1]{V1} J.L.~Vazquez, {\em Smoothing and decay estimates for nonlinear
diffusion equations}, Oxford Lecture Series in Mathematics and its Applications
33, Oxford University Press, Oxford, 2006.

\bibitem[V2]{V2} J.L.~Vazquez, {\em The porous medium equation-Mathematical 
Theory}, Oxford Mathematical Monographs, Oxford University Press, 2007.

\end{thebibliography}
\end{document}